\theoremstyle{plain}
\newtheorem{theorem}{Theorem}[section]
\newtheorem{lemma}[theorem]{Lemma}
\theoremstyle{definition}
\numberwithin{equation}{section}
\newcommand{\lkn}{\left(1-\frac kN\right)}
\newcommand{\ab}{\alpha+\beta}
\title[Jacobi ensembles]{On the extremal eigenvalues of Jacobi ensembles at zero temperature}
\author{Kilian Hermann, Michael Voit} 
\address{Fakult\"at Mathematik, Technische Universit\"at Dortmund,
          Vogelpothsweg 87,
          D-44221 Dortmund, Germany}
\email{kilian.hermann@math.tu-dortmund.de, michael.voit@math.tu-dortmund.de}
\subjclass[2010]{Primary 60B20; Secondary 60F05,  70F10, 82C22, 33C45, 33C10. }
\keywords{$\beta$-Jacobi ensembles, $\beta$-Laguerre ensembles,  freezing limits, hard edge analysis,
 extremal eigenvalues, Jacobi polynomials, dual polynomials, Bessel functions}
\begin{document}
\date{\today}

\begin{abstract}
  For the $\beta$-Hermite, Laguerre, and Jacobi ensembles of  dimension $N$ there
  exist central limit theorems for the freezing case $\beta\to\infty$ such that the
  associated means and covariances can be expressed in terms of the associated Hermite, Laguerre,
  and Jacobi polynomials of order $N$ respectively as well as via the associated dual polynomials
  in the sense of de Boor and Saff.
  In this paper we derive limits for $N\to\infty$ for the covariances of the $r\in\mathbb N$ largest (and smallest) eigenvalues for these 
  frozen Jacobi ensembles in terms of Bessel functions. These results correspond
  to the hard edge analysis in the frozen Laguerre cases by Andraus and  Lerner-Brecher and to known results for finite $\beta$.
  \end{abstract}

\maketitle

\section{Introduction}

The $\beta$-Hermite, Laguerre, and Jacobi ensembles of  dimension $N$ depend on 1,2, and 3 parameters respectively.
The associated distributions on the 
corresponding  Weyl chambers and alcoves in $\mathbb R^N$ can be described in suitable coordinates as follows:

In the $\beta$-Hermite case, for the parameter $\beta>0$, by
\begin{equation}\label{densitiybetaHermite}
d\mu_H^\beta(x)=	C_{\beta,N,H}\prod_{1\leq i<j\leq N}(x_j-x_i)^\beta\prod_{i=1}^N e^{-x_i^2/2}\> dx
\end{equation}
on the Weyl chamber $C_N^A:=\{x\in\mathbb R^N:\> x_1\le x_2\le \cdots\le x_N\}$ of type $A$.

In the $\beta$-Laguerre case, for the parameters $\beta,\nu>0$, by
\begin{equation}\label{densitiybetaLaguerre}
d\mu_L^{\beta,\nu}(x)=	C_{\beta,\nu, N,L}\prod_{1\leq i<j\leq N}(x_j^2-x_i^2)^{2\beta}\prod_{i=1}^N\left(  x_i^{2\nu\beta} e^{-x_i^2/2}\right)\> dx
\end{equation}
on the Weyl chamber $C_N^B:=\{x\in\mathbb R^N:\> 0\le x_1\le x_2\le \cdots\le x_N\}$ of type $B$.

In the $\beta$-Jacobi case, for the parameters $\kappa>0$ and $\alpha,\beta >-1$, by
\begin{equation}\label{densitiybetaJacobi}
  d\mu_J^{\kappa,\alpha,\beta}(x)=	C_{\kappa,\alpha,\beta, N,J}
\prod_{1\leq i<j\leq N}(x_j-x_i)^{\kappa}\prod_{i=1}^N(1-x_i)^{\frac{(\alpha+1) \kappa}{2}-\frac12}(1+x_i)^{\frac{(\beta+1)\kappa}{2}-\frac12}\> dx
\end{equation}
on the alcove $A_N:=\{x\in\mathbb R^N: \> -1\le x_1\le\cdots\le x_N\le 1\}$.

Here, 	$C_{\beta,N,H}, 	C_{\beta,\nu, N,L}, 	C_{\kappa,\alpha,\beta, N,J}>0$ are known Selberg-type normalization constants; see e.g.
\cite{M, FW}.

Our parametrization in the Jacobi case
(\ref{densitiybetaJacobi}) is unusual in random matrix theory (see e.g.~\cite{BG, F1, F2, Ji, K, KN, Li, N, Wi}), where our $\kappa$
is usually denoted by $\beta$. Our parametrization is motivated by notations in special functions, as in
the notation (\ref{densitiybetaJacobi})
the Jacobi ensembles are closely related with the
roots of the classical one-dimensional Jacobi polynomials $P_N^{(\alpha,\beta)}$. 

For all three classes of ensembles there exist freezing central limit theorems for $\beta\to\infty$ in the first two cases
and $\kappa\to\infty$ in the Jacobi case, where
 the further parameters are fixed, and where the additive corrections as well as the entries of the $N\times N$-dimensional inverse matrices of
the
covariance matrices can be computed explicitely in terms of the roots of the associated classical Hermite polynomial $H_N$,  Laguerre
polynomial $L_N^{(\nu-1)}$, and  Jacobi polynomial $P_N^{(\alpha,\beta)}$ respectively; see \cite{DE, V, AV, AHV, GK} for the Hermite and Laguerre case, and
\cite{HV} for the Jacobi case. Moreover, the eigenvalues and eigenvectors of these inverse covariance matrices are known (see
\cite{AV, HV}). This and the theory of dual orthogonal polynomial in the sense of de Boor and Saff (see \cite{BS, I, VZ})
was used in \cite{AHV} to compute  the covariance matrices in all three cases. Different direct appraches to the covariance matrices
in the Hermite and Laguerre case are given in \cite{DE, GK, L}.

In all these cases, it is possible to describe the limits for the extremal entries of these  covariance matrices for $N\to\infty$.
In the Hermite case as well as for the soft edge Laguerre case (i.e., the $r$ largest eigenvalues), the corresponding
roots of the corresponding Hermite and Laguerre polynomials can be described (after renorming) via the corresponding roots 
of the Airy function, and the entries of the covariance matrices tend to some integrals involving the Airy function; see \cite{GK, AHV}
for details. On the other hand, for the  hard edge Laguerre case (i.e., the $r$-th smallest eigenvalues), the Airy function must be
replaced by some  Bessel function $J_\alpha$ ; see \cite{A, L}.

In this paper we mainly derive corresponding results for the $r$-th largest eigenvalues of Jacobi ensembles where again 
the  Bessel function $J_\alpha$ and its roots appears.
Corresponding results  for the $r$-th smallest eigenvalues can be obtained  by using the transformation $x\mapsto -x$
where $\alpha,\beta$ are exchanged.

In order to state the main results in the Jacobi case, let 
$-1<z_{1,N}^{(\alpha,\beta)}<...<z_{N,N}^{(\alpha,\beta)}<1$ be the  ordered zeros of the Jacobi polynomial $P_N^{(\alpha,\beta)}$ (as defined e.g.~in \cite{S}), and
 let  $z_N^{(\alpha,\beta)}=(z_{1,N}^{(\alpha,\beta)},...,z_{N,N}^{(\alpha,\beta)})\in A_N$ be the vector with these zeros as entries.

 Furthermore, according to \cite{W}, we consider the  Bessel functions
 $$		J_\alpha(z)=\sum_{m=0}^{\infty}\frac{(-1)^m(\frac{z}{2})^{\alpha+2m}}{m!\Gamma(\alpha+m+1)} $$
 on $]0,\infty]$ and
     the increasing sequence  $(j_{\alpha,r})_{r\ge1}$ of positive zeroes of  $J_\alpha$. We then have:

  \begin{theorem}\label{asymptotics-jacobi-intro}
    Let $\alpha,\beta>-1$. For
  $\kappa>0$ let $X_\kappa$ be $N$-dimensional Jacobi-type
  random variables with the densities (\ref{densitiybetaJacobi}) on the alcoves $A_N$. Then
  $\sqrt{\kappa}\left({X_\kappa}-z^{(\alpha,\beta)}_N\right)$ converges for $\kappa\to\infty$
  in distribution to some centered $N$-dimensional normal distribution
  $N(0,\Sigma_N)$. For $r,s\in\mathbb{N}$, the entries of these covariance matrices 
  $\Sigma_{N}=:\left(\sigma_{i,j}\right)_{i,j=1,...,N}$
satisfy
\begin{equation}\label{limit-jacobi-algebraic-intro}
	\lim\limits_{N\rightarrow\infty}  \sigma_{N-r+1,N-s+1}=
	\frac{1}{4 j_{\alpha,r} j_{\alpha,s}
          J_\alpha^\prime(j_{\alpha,r}) J_\alpha^\prime(j_{\alpha,s})}
        \int_{0}^{1}\frac{y}{1-y^2}J_\alpha(j_{\alpha,r}y)J_\alpha(j_{\alpha,s}y) \> dy.
\end{equation}
  \end{theorem}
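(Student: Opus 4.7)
The plan is to start from an explicit formula for $\Sigma_N$. By \cite{HV,AHV}, $\Sigma_N^{-1}$ has eigenvalues and eigenvectors expressible in closed form in terms of the zeros $z_{k,N}^{(\alpha,\beta)}$ of $P_N^{(\alpha,\beta)}$ and the associated dual polynomials of de Boor-Saff, so $\sigma_{N-r+1,N-s+1}=\sum_k v_k(N-r+1)v_k(N-s+1)/\lambda_k$ is a finite sum with explicit summands. The first step is to rewrite this sum so as to isolate the factors depending only on the two test positions $z_{N-r+1,N}^{(\alpha,\beta)}$ and $z_{N-s+1,N}^{(\alpha,\beta)}$ as a prefactor, leaving behind an $N$-term sum indexed by $k$ whose limit will produce the integral in (\ref{limit-jacobi-algebraic-intro}).

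The second step is to inject the Mehler-Heine asymptotic at the hard edge $+1$,
\begin{equation*}
N^{-\alpha}P_N^{(\alpha,\beta)}\!\Bigl(1-\frac{z^2}{2N^2}\Bigr)\;\longrightarrow\;\Bigl(\frac z2\Bigr)^{-\alpha}J_\alpha(z)
\end{equation*}
(uniform for $z$ in compact subsets of $[0,\infty)$), together with the derivative version obtained via the identity $\frac{d}{dz}[(z/2)^{-\alpha}J_\alpha(z)]=-(z/2)^{-\alpha}J_{\alpha+1}(z)$ and $J_{\alpha+1}(j_{\alpha,r})=-J_\alpha'(j_{\alpha,r})$. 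This gives $N^{2}\bigl(1-z_{N-k+1,N}^{(\alpha,\beta)}\bigr)\to \tfrac12 j_{\alpha,k}^2$ and $(P_N^{(\alpha,\beta)})^\prime(z_{N-r+1,N}^{(\alpha,\beta)})\sim c_{\alpha,r}\,N^{\alpha+2}\,J_\alpha'(j_{\alpha,r})$ for an explicit constant. Substituting these limits into the isolated prefactor yields the constant $\bigl[4\,j_{\alpha,r}j_{\alpha,s}J_\alpha'(j_{\alpha,r})J_\alpha'(j_{\alpha,s})\bigr]^{-1}$ of (\ref{limit-jacobi-algebraic-intro}), the $N$-dependent factors cancelling against the $N$-dependent rescaling of the summation measure from step three.

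The third step is to convert the remaining sum into the integral. A suitable rescaling of the summation index $k$ to a variable $y_{k,N}\in(0,1)$ (with the precise change of variables dictated by the explicit formula from step one and by Mehler-Heine) should identify the sum with a Riemann sum whose mesh tends to zero as $N\to\infty$. The weight $y/(1-y^2)\,dy$ should emerge from the product of the Jacobi weight $(1-x)^{(\alpha+1)\kappa/2-1/2}(1+x)^{(\beta+1)\kappa/2-1/2}$ (after the freezing) and the asymptotic density of zeros of $P_N^{(\alpha,\beta)}$ on $[-1,1]$, with the $1/(1-y^2)$ reflecting proximity to the hard edge; pointwise Mehler-Heine then produces the integrand $J_\alpha(j_{\alpha,r}y)J_\alpha(j_{\alpha,s}y)$, and a dominated-convergence argument promotes the pointwise convergence to convergence of the integrals. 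Integrability near $y=1$ is automatic because the two Bessel factors both vanish there (since $J_\alpha(j_{\alpha,r})=J_\alpha(j_{\alpha,s})=0$), which neutralizes the apparent $(1-y)^{-1}$ singularity of the weight.

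The principal obstacle will be the uniform control needed to justify the Riemann-sum limit, since Mehler-Heine is a purely local hard-edge statement while the sum ranges over all $N$ zeros of $P_N^{(\alpha,\beta)}$, including those in the bulk of $(-1,1)$ and those near the opposite hard edge $-1$. The bulk terms must be estimated via global Darboux-type asymptotics for Jacobi polynomials on $(-1,1)$ (which show that the oscillatory contributions wash out after summation), and the opposite-edge terms must be shown to be negligible after rescaling; the transition region between hard edge and bulk is where the most delicate bookkeeping lies. The hard-edge Laguerre analysis of Andraus and Lerner-Brecher in \cite{A, L} provides the template, which must be adapted to the two-boundary alcove $[-1,1]$ by carrying along the factor from the opposite weight $(1+x)^{(\beta+1)\kappa/2-1/2}$ and verifying that it only contributes through the benign $(1+y)^{-1}$ part of the weight $y/(1-y^2)$.
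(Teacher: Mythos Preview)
Your starting point and Riemann-sum strategy are the same as the paper's, but there is a genuine misidentification of what the sum ranges over. With the eigendecomposition from \cite{HV,AHV} one has (in trigonometric coordinates)
\[
\tilde\sigma_{N-r+1,N-s+1}=\sum_{k=1}^{N}\frac{1}{\lambda_k}\,\sqrt{1-z_{N-r+1,N}^2}\,\sqrt{1-z_{N-s+1,N}^2}\,Q_{k-1,N}(z_{N-r+1,N})\,Q_{k-1,N}(z_{N-s+1,N}),
\]
where $(Q_{k,N})_{k=0,\dots,N-1}$ are the de Boor--Saff dual polynomials. The summation index $k$ is the \emph{eigenvalue/dual-polynomial degree}; the two evaluation points $z_{N-r+1,N}$ and $z_{N-s+1,N}$ are fixed near $+1$ throughout the sum. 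This is \emph{not} a sum over the $N$ zeros of $P_N^{(\alpha,\beta)}$, so your worry about bulk zeros, opposite-edge zeros, and Darboux asymptotics is based on a misconception, and Mehler--Heine for $P_N^{(\alpha,\beta)}$ at varying arguments cannot produce the summands. No ``zero density'' enters either: the weight $y/(1-y^2)$ arises purely from $1/\lambda_{k+1}=1/(2(k+1)(2N+\alpha+\beta-k))$ together with the $\sqrt{1-y}$ factors below, after the substitution $y\mapsto 1-y$.

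What is actually needed is the asymptotic profile of $k\mapsto Q_{k,N}(z_{N-r+1,N})$ as a function of $y=k/N\in[0,1)$. The paper does not extract this from classical Jacobi asymptotics but directly from the three-term recurrence of the dual polynomials: inserting $z_{N-r+1,N}=1-j_{\alpha,r}^2/(2N^2)+O(N^{-3})$ (the only place hard-edge information on $P_N$ is used), the recurrence becomes a second-order difference scheme which, after a double summation, is a Gronwall perturbation of the integral form of
\[
(1-y)^2 f''(y)+\Bigl(j_{\alpha,r}^2(1-y)^2-\alpha^2+\tfrac14\Bigr)f(y)=0,\qquad f(0)=0,\quad f'(0)=\sqrt2\,j_{\alpha,r}.
\]
Its unique solution is $f^{(\alpha,r)}(y)=-\sqrt2\,\sqrt{1-y}\,J_\alpha\bigl(j_{\alpha,r}(1-y)\bigr)/J_\alpha'(j_{\alpha,r})$, and Gronwall yields the locally uniform rate $|f_N-f^{(\alpha,r)}|=O(N^{-1})$ on $[0,1)$. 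The interchange of limit and integral near the singular endpoint is then carried out not by dominated convergence but by combining this $O(N^{-1})$ rate with the Bessel orthonormality $\int_0^1 f^{(\alpha,r)}f^{(\alpha,s)}\,dy=\delta_{r,s}$; the prefactor $1/(4j_{\alpha,r}j_{\alpha,s})$ finally comes from the diagonal change of coordinates between the trigonometric and algebraic covariance matrices, not from $(P_N^{(\alpha,\beta)})'$ at a zero.
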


  We shall prove Theorem \ref{asymptotics-jacobi-intro} in the next section. We point out that
  corresponding results  for the opposite components of Jacobi ensembles can be obtained immediately 
  by the transformation $x\mapsto -x$  where $\alpha,\beta$ are exchanged.
  We do not state this result separately. Moreover, Theorem \ref{asymptotics-jacobi-intro}
  can be also stated for Jacobi ensembles in trigonometric coordinates, which fit to the theory of Heckman-Opdam Jacobi polynomials
  \cite{HO, HS}; see Section 2 for more details.

  Our approach to Theorem \ref{asymptotics-jacobi-intro} for Jacobi ensembles can be also transferred to
  the hard edge analysis (i.e., for the smallest components) of Laguerre ensembles where the following holds:
  
	\begin{theorem}\label{limit-cov-laguerre-intro}
	Let $\nu>0$.  For $\beta>0$ consider Laguerre-type random variables $X_\beta$ on $C_N^B$ 
  with the densities (\ref{densitiybetaLaguerre}). Let 
 $0< z_{1,N}^{(\nu-1)}<\ldots<  z_{N,N}^{(\nu-1)}$ be the ordered zeroes of
 the Laguerre polynomial $L_N^{(\nu-1)}$ and
$${\bf r}:={\bf r}(\nu):=(r_1,\ldots, r_N):=\left(\sqrt{ z_{1,N}^{(\nu-1)}}, \ldots, \sqrt{ z_{N,N}^{(\nu-1)}}\right).$$
  Then  $ X_\beta-\sqrt{2\beta} \cdot {\bf r}$
 	converges  for $\beta\to\infty$ in distribution to  some normal distribution 
        $N(0,\Sigma_N)$. For $r,s\in\mathbb{N}$, the entries of the
        covariance matrices $\Sigma_{N}=\left(\sigma_{i,j}^{(N)}\right)_{i,j=1,...,N}$
satisfy
\begin{align*}
		\lim_{N\to\infty} N\sigma_{r,s}^{(N)}=
		\int_{0}^{1}\frac{ J_{\nu-1}(j_{\nu-1,r}\sqrt{1-y}) J_{\nu-1}(j_{\nu-1,s}\sqrt{1-y})}{2y\cdot J_{\nu-1}^\prime(j_{\nu-1,r})J_{\nu-1}^\prime(j_{\nu-1,s})} \>    dy.
		\end{align*}
\end{theorem}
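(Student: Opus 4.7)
The plan is to parallel the argument sketched for Theorem \ref{asymptotics-jacobi-intro} in the Laguerre setting. The distributional convergence $X_\beta - \sqrt{2\beta}\,{\bf r} \to N(0,\Sigma_N)$ and the identification of the limit point $\sqrt{2\beta}\,{\bf r}$ via the electrostatic equations characterising the zeros of $L_N^{(\nu-1)}$ are already available from \cite{AV}. Moreover, by \cite{AHV} the entries of $\Sigma_N$ are computed explicitly via the dual orthogonal polynomial theory of de Boor-Saff, yielding for each pair $r,s\le N$ a closed expression for $\sigma_{r,s}^{(N)}$ as a finite sum over $k=1,\ldots,N$ whose summands involve the Laguerre zeros $z_{k,N}^{(\nu-1)}$ and the derivative values $L_N^{(\nu-1)\prime}(z_{k,N}^{(\nu-1)})$. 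Only the hard edge limit of this formula must then be extracted.

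To identify the limit I would insert the classical Mehler-Heine asymptotic
\[
\lim_{N\to\infty} N^{-(\nu-1)} L_N^{(\nu-1)}\!\Bigl(\tfrac{z}{N}\Bigr) \;=\; z^{-(\nu-1)/2} J_{\nu-1}(2\sqrt{z}),
\]
uniform on compact subsets of $\comp$, into the AHV formula. This yields the hard edge zero asymptotic $N\cdot z_{k,N}^{(\nu-1)} \to (j_{\nu-1,k}/2)^2$ for every fixed $k$ and, after differentiation, the corresponding asymptotics of $L_N^{(\nu-1)\prime}$ evaluated at the small zeros in terms of $J'_{\nu-1}(j_{\nu-1,k})$. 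Applied to the two factors attached to the fixed hard edge indices $r$ and $s$, this produces the prefactor $1/\bigl(2\, J'_{\nu-1}(j_{\nu-1,r})\, J'_{\nu-1}(j_{\nu-1,s})\bigr)$ appearing in the claimed limit.

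For the running summation index $k$, where only the quotient $k/N$ is bounded, I would use the Marchenko-Pastur-type bulk asymptotic for the normalised Laguerre zeros $z_{k,N}^{(\nu-1)}/(4N)\in[0,1]$ together with the change of variables $y = 1 - z_{k,N}^{(\nu-1)}/(4N)$ suggested by the $\sqrt{1-y}$ occurring in the target integrand. Under this substitution, the sum multiplied by $N$ becomes a Riemann sum approximating the integral $\int_0^1 \frac{J_{\nu-1}(j_{\nu-1,r}\sqrt{1-y})\,J_{\nu-1}(j_{\nu-1,s}\sqrt{1-y})}{2y\,J'_{\nu-1}(j_{\nu-1,r})\,J'_{\nu-1}(j_{\nu-1,s})}\,dy$, with the factor $1/y$ emerging from the combination of the bulk density of zeros and the Jacobian of the substitution.

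The main obstacle will be justifying the limit uniformly over the three asymptotic regimes of $k$ present in the sum: the hard edge window (small $k$, Mehler-Heine scaling), the bulk (with $k/N$ in a compact subset of $(0,1)$, Marchenko-Pastur scaling), and the soft edge (with $N-k$ bounded, Plancherel-Rotach/Airy scaling). Only the bulk contributes to the integral; one must control the matching between the hard edge and bulk windows and check that the soft edge tail remains negligible after multiplication by $N$. This is precisely the type of decomposition carried out at finite $\beta$ by Andraus and Lerner-Brecher in \cite{A, L}, whose estimates should serve as a blueprint.
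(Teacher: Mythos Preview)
Your plan rests on a misreading of the AHV formula. The spectral representation $\Sigma_N=T_N\,\mathrm{diag}(1/\lambda_1,\dots,1/\lambda_N)\,T_N^T$ with $\lambda_k=2k$ and $(T_N)_{i,j}=\sqrt{z_{i,N}^{(\nu-1)}}\,Q_{j-1,N}(z_{i,N}^{(\nu-1)})$ gives
\[
\sigma_{r,s}^{(N)}=\sum_{k=1}^{N}\frac{1}{2k}\,\sqrt{z_{r,N}^{(\nu-1)}\,z_{s,N}^{(\nu-1)}}\;Q_{k-1,N}\bigl(z_{r,N}^{(\nu-1)}\bigr)\,Q_{k-1,N}\bigl(z_{s,N}^{(\nu-1)}\bigr),
\]
so the running index $k$ labels the eigenvalue (equivalently the degree of the \emph{dual} polynomial), not a Laguerre zero. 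The summands contain the fixed hard-edge zeros $z_{r,N}^{(\nu-1)},z_{s,N}^{(\nu-1)}$ only, and no quantities $z_{k,N}^{(\nu-1)}$ or $L_N^{(\nu-1)\prime}(z_{k,N}^{(\nu-1)})$ with varying $k$ appear. Consequently neither the Mehler--Heine expansion of $L_N^{(\nu-1)}$ nor the Marchenko--Pastur bulk density of zeros is the relevant input; the factor $1/y$ in the limit integral is simply $N/\lambda_{k}=N/(2k)\sim 1/(2y)$ with $y=k/N$, not a zero-density-times-Jacobian artefact, and no hard/bulk/soft-edge trichotomy over $k$ arises.

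What is actually needed is the asymptotic behaviour, for $k/N\to y\in[0,1)$, of the dual polynomials $Q_{k,N}$ evaluated at the fixed small argument $z_{r,N}^{(\nu-1)}\sim j_{\nu-1,r}^{2}/(4N)$. The $Q_{k,N}$ satisfy the Laguerre three-term recurrence with \emph{reversed} coefficients, so Mehler--Heine for $L_N^{(\nu-1)}$ does not apply to them. The paper obtains their scaling limit by rewriting this reversed recurrence as a perturbed second-order integral equation for the step functions $f_N(y)=\sqrt{N z_{r,N}^{(\nu-1)}}\,(-1)^kQ_{k,N}(z_{r,N}^{(\nu-1)})$ and invoking Gronwall's lemma; the limit integral equation is equivalent to Bessel's ODE after the substitution $y\mapsto 1-y$, which is how $J_{\nu-1}(j_{\nu-1,r}\sqrt{1-y})$ enters. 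The remaining analytic issue is not a three-regime matching but the exchange of limit and integration against the singular weight $1/(2y)$ near $y=0$, which the paper handles by a cutoff argument exploiting $f_N(0)\to 0$ and the $L^2$-normalisation of the rows of $T_N$.
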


  This result was already studied in \cite{A, L}: Lerner-Brecher \cite{L}
  uses a different, more general and complicated approach via  Laguerre-type interlacing results  while the approach in   Andraus \cite{A}
  is very similar to that in \cite{AHV} and in this paper. However, Andraus \cite{A} claims at some stage
  of his proof some uniform convergence result
  which we are not able to verify. He needs this result in order to interchange some limit with an integration. We  explain briefly in Section 3
  that  this exchange can be made precise with our methods of Section 2. We also mention that in \cite{CDM} the same method as in \cite{L}
  is applied to the hard edge analysis of discrete particle systems associated with deformed Young diagrams and Jack polynomials.

  We finally mention that Theorem \ref{asymptotics-jacobi-intro}  can be also stated for the extremal cases
  $\alpha=-1$ and/or $\beta=-1$ where then normal distributions on half and quarter spaces appear as limits;
  see \cite{He}. It is likely that Theorem \ref{asymptotics-jacobi-intro} can be extended also to such degenerated cases.

  Parts of this paper are contained in the thesis \cite{He}.
  
  \section{Proof for the extremal covariances of Jacobi ensembles}

In this section we prove Theorem \ref{asymptotics-jacobi-intro} as well its variant for Jacobi ensemles in trigonometric coordinates.
  
As before, let 
$-1<z_{1,N}^{(\alpha,\beta)}<...<z_{N,N}^{(\alpha,\beta)}<1$ be the  ordered zeros of the Jacobi polynomial $P_N^{(\alpha,\beta)}$
(as defined e.g.~in \cite{S}), and
let  $z_N^{(\alpha,\beta)}=(z_{1,N}^{(\alpha,\beta)},...,z_{N,N}^{(\alpha,\beta)})\in A_N$ be the vector with these zeros as entries.

The following central limit theorem is essential in this section; see Theorem 2.6 of \cite{HV}:

\begin{theorem}\label{theorem:cltjacobi}
  For $\alpha,\beta >-1$ and $\kappa>0$ let $X_\kappa$ be $N$-dimensional Jacobi-type
  random variables with the densities (\ref{densitiybetaJacobi}). Then
  $\sqrt{\kappa}\left({X_\kappa}-z^{(\alpha,\beta)}_N\right)$ converges for $\kappa\to\infty$
  in distribution to the centered $N$-dimensional normal distribution
  $N(0,\Sigma_N)$ where the inverse  
	$\Sigma_N^{-1}=:S_N=(s_{i,j})_{i,j=1,...,N}$ of the
covariance matrix $\Sigma_N$ is given by
	\begin{align*}
	s_{i,j}=
	\begin{cases}\sum_{l=1,\ldots,N; l\ne j}
	  \frac{1}{(z_{j,N}^{(\alpha,\beta)}-z_{l,N}^{(\alpha,\beta)})^2}+\frac{\alpha+1}{2}\frac{1}{(1-z_{j,N}^{(\alpha,\beta)})^2}+\frac{\beta+1}{2}\frac{1}{(1+z_{j,N}^{(\alpha,\beta)})^2}
          &\textit{ for }i=j\\
	\frac{-1}{(z_{i,N}^{(\alpha,\beta)}-z_{j,N}^{(\alpha,\beta)})^2}&\textit{ for }i\neq j
	\end{cases}
	\end{align*}
\end{theorem}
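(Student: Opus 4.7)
The plan is to apply a Laplace / saddle-point analysis to the density (\ref{densitiybetaJacobi}). Writing it in exponential form,
$$f_\kappa(x)\;=\;C_{\kappa,\alpha,\beta,N,J}\,\exp\!\bigl[\,\kappa\,\Psi(x)+\Phi(x)\,\bigr],$$
with leading part
$$\Psi(x)=\sum_{1\le i<j\le N}\log(x_j-x_i)+\frac{\alpha+1}{2}\sum_{i=1}^N\log(1-x_i)+\frac{\beta+1}{2}\sum_{i=1}^N\log(1+x_i)$$
and bounded correction $\Phi(x)=-\tfrac{1}{2}\sum_{i}\log(1-x_i^2)$, the claim reduces to a CLT for the rescaled variable $Y_\kappa:=\sqrt{\kappa}\,(X_\kappa-z_N^{(\alpha,\beta)})$. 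I would derive it by Taylor-expanding $\kappa\Psi$ around its mode and reading off the limiting Gaussian.

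First, I would identify $z_N^{(\alpha,\beta)}$ as the unique maximizer of $\Psi$ on $A_N$. The critical-point equations $\partial_j\Psi=0$ take the Stieltjes form
$$\sum_{l\neq j}\frac{1}{x_j-x_l}\;=\;\frac{\alpha+1}{2(1-x_j)}\,-\,\frac{\beta+1}{2(1+x_j)}\qquad(j=1,\dots,N),$$
and these are exactly the relations satisfied by the ordered zeros of $P_N^{(\alpha,\beta)}$: evaluating the Jacobi ODE $(1-x^2)P_N^{\prime\prime}+(\beta-\alpha-(\alpha+\beta+2)x)P_N^\prime+N(N+\alpha+\beta+1)P_N=0$ at a zero $z_{j,N}^{(\alpha,\beta)}$ and using $P_N^{\prime\prime}(z_j)/P_N^\prime(z_j)=2\sum_{l\neq j}(z_j-z_l)^{-1}$ yields the same system. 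Uniqueness of the critical point is automatic, since $-\Psi$ is strictly convex on $\inte(A_N)$ — each summand $-\log(x_j-x_i)$, $-\log(1-x_i)$, $-\log(1+x_i)$ is convex — and blows up on $\partial A_N$; hence $\Psi$ has a unique interior maximizer, which must be $z_N^{(\alpha,\beta)}$.

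Next, a direct computation of the Hessian of $-\Psi$ at this mode gives
$$[-\nabla^2\Psi]_{jj}=\sum_{l\neq j}\frac{1}{(z_j-z_l)^2}+\frac{\alpha+1}{2(1-z_j)^2}+\frac{\beta+1}{2(1+z_j)^2},\qquad [-\nabla^2\Psi]_{ij}=-\frac{1}{(z_i-z_j)^2}\ \;(i\neq j),$$
reproducing exactly the matrix $S_N$ of the theorem. Taylor expansion then yields, for each fixed $y\in\mathbb{R}^N$,
$$\kappa\,\Psi\!\bigl(z_N^{(\alpha,\beta)}+y/\sqrt{\kappa}\bigr)\;=\;\kappa\,\Psi\!\bigl(z_N^{(\alpha,\beta)}\bigr)\,-\,\tfrac{1}{2}\,y^{\top}S_N\,y\,+\,O(\kappa^{-1/2}|y|^3),$$
while $\Phi(z_N^{(\alpha,\beta)}+y/\sqrt{\kappa})\to\Phi(z_N^{(\alpha,\beta)})$. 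After multiplication by the Jacobian $\kappa^{-N/2}$ of the rescaling and renormalization, the density of $Y_\kappa$ converges pointwise on $\mathbb{R}^N$ to $((\det S_N)/(2\pi)^N)^{1/2}\exp(-\tfrac{1}{2}y^{\top}S_Ny)$.

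The main obstacle, as usual in Laplace-type arguments, is promoting this pointwise convergence to convergence in distribution. I would exploit strict convexity of $-\Psi$ together with its boundary blow-up to extract a quadratic coercivity bound $\Psi(z_N^{(\alpha,\beta)}+h)-\Psi(z_N^{(\alpha,\beta)})\le -c|h|^2$ on a fixed neighbourhood, and a linear coercivity bound farther away, so that the rescaled densities are dominated uniformly in $\kappa$ by an integrable Gaussian. Dominated convergence (or, equivalently, Scheff\'e's lemma) then converts pointwise convergence of densities to $L^1$-convergence, and hence to convergence in distribution. The correction $\Phi$ is harmless: it is smooth at $z_N^{(\alpha,\beta)}$ and only shifts the true mode by $O(\kappa^{-1})$, which is $O(\kappa^{-1/2})$ on the $Y_\kappa$-scale and therefore negligible in the limit.
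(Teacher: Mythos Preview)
The paper does not prove this theorem here; it is quoted from \cite{HV} (see the sentence preceding the statement). Your Laplace/saddle-point argument is the standard route to such freezing CLTs and is essentially the approach taken in \cite{HV} (and in \cite{DE,V,AV} for the Hermite and Laguerre cases): identify the mode of the log-density as the Stieltjes system for the Jacobi zeros, compute the Hessian, and upgrade pointwise convergence of the rescaled densities to convergence in law via a domination argument. Your identification of the critical equations with the Jacobi ODE and your Hessian computation are both correct and give exactly the matrix $S_N$.

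One small inaccuracy: $\Phi(x)=-\tfrac12\sum_i\log(1-x_i^2)$ is \emph{not} bounded---it blows up at $\partial A_N$. This does not damage your argument, since you only use smoothness of $\Phi$ near the interior mode, and for $\kappa$ large the full exponent $\kappa\Psi+\Phi$ still tends to $-\infty$ at the boundary (each boundary factor carries exponent $\tfrac{(\alpha+1)\kappa}{2}-\tfrac12>0$, respectively $\tfrac{(\beta+1)\kappa}{2}-\tfrac12>0$, and similarly for the Vandermonde factors). Just replace ``bounded correction'' by ``$\kappa$-independent correction, smooth in the interior'' and the rest of your outline goes through.
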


As in \cite{HV, AHV} we now rewrite the densities
 (\ref{densitiybetaJacobi}) and thus Theorem \ref{theorem:cltjacobi}  in  trigonometric coordinates, i.e.,
we define the probability measures $\tilde\mu_J^{\kappa,\alpha,\beta}(x)$ on the trigonometric alcoves
$$\tilde A_N:=\{t\in\mathbb R^N|\frac{\pi}{2}\geq t_1\geq...\geq t_N\geq 0 \}$$
with the Lebesgue densities
\begin{equation}\label{density-joint-trig}
\tilde 	C_{\kappa,\alpha,\beta, N,J}\cdot \prod_{1\leq i< j \leq N}\left(\cos(2t_j)-\cos(2t_i)\right)^{\kappa}
\prod_{i=1}^N\Bigl(\sin(t_i)^{\kappa(\alpha-\beta)}\sin(2t_i)^{\kappa(\alpha+1)}\Bigr)
\end{equation}
with some explicitly known normalization  $\tilde 	C_{\kappa,\alpha,\beta, N,J}>0$.
Then the $\mu_J^{\kappa,\alpha,\beta}(x)$ from (\ref{densitiybetaJacobi})  are the pushforwards of the 
  $\tilde\mu_J^{\kappa,\alpha,\beta}(x)$   under the transformations
\begin{align}\label{transformationtrigalgebraic}	
T_N: \tilde A_N\longrightarrow A_N, \quad T_N(t_1,\ldots,t_N):=(\cos(2t_1),\ldots, \cos(2t_N));
\end{align}
see \cite{HV}. This transformation is motivated by the theory of Heckman-Opdam on multivariate hypergeometric functions (see \cite{HO, HS}).

The CLT \ref{theorem:cltjacobi} then has the following form in which the eigenvectors and eigenvalues of the inverse
covariance matrix can be described more easily; see  \cite{HV, AHV}. We shall suppress the dependence of the zeroes $z_{j,N}^{(\alpha,\beta)}$ and the vector
$z_N^{(\alpha,\beta)}$ from the fixed parameters  $\alpha,\beta >-1$ from now on.

\begin{theorem}\label{theoremclttrig}
  For $\alpha,\beta >-1$ and $\kappa>0$ let $\tilde X_\kappa$ be $N$-dimensional trigonometric Jacobi-type 
  random variables with the densities (\ref{density-joint-trig}).
  Then  $$\sqrt{\kappa}(\tilde X_\kappa-T_N^{-1}(z_N))$$
 	converges  for $\kappa\to\infty$ in distribution to 
        $N(0,\tilde\Sigma_N)$ where the inverse
        $\tilde{\Sigma_N}^{-1}=:\tilde{S_N}=(\tilde{s}_{i,j})_{i,j=1,...,N}$ of the covariance matrix $\tilde{\Sigma_N}$  satisfies
 	\begin{align}
 	\tilde s_{i,j}=
 	\begin{cases}4\sum_{ l: l\ne j}
 	  \frac{1-z_{j,N}^2}{(z_{j,N}-z_{l,N})^2}+2(\alpha+1)\frac{1+z_{j,N}}{1-z_{j,N}}
          +2(\beta+1)\frac{1-z_{j,N}}{1+z_{j,N}} &\textit{ for }i=j\\
 	\frac{-4\sqrt{(1-z_{j,N}^2)(1-z_{i,N}^2)}}{(z_{i,N}-z_{j,N})^2}&\textit{ for }i\neq j.
 	\end{cases}
 	\end{align}
        The inverse covariance matrices $\tilde S_N$ here  and $S_N$ from Theorem \ref{theorem:cltjacobi} are related by
        \begin{equation}\label{connection-covariances}
          \tilde S_N = D S_N D \quad\text{with the diagonal matrix}\quad D=diag( 2\sqrt{1-z_{1,N}^2},\ldots,  2\sqrt{1-z_{N,N}^2}).
          	\end{equation}
        Furthermore, $\tilde S_N$ has the eigenvalues
 	$$\lambda_k=2k(2N+\alpha+\beta+1-k)>0 \quad\quad (k=1,\ldots,N).$$
 	The $\lambda_k$ have  eigenvectors of   the form
 	$$v_k:=\left(q_{k-1}(z_{1,N})\sqrt{1-z_{1,N}^2},\ldots,q_{k-1}(z_{N,N})\sqrt{1-z_{N,N}^2}\right)^T$$
 	with the finite orthonormal
         polynomials $(q_{k})_{k=0,\ldots,N-1}$   associated  with the discrete measure
 	\begin{equation}
 	\mu_{N,\alpha,\beta}:=(1-z_{1,N}^2)\delta_{z_{1,N}}+\ldots+(1-z_{N,N}^2)\delta_{z_{N,N}}.
 	\end{equation}
\end{theorem}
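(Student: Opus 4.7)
The plan is to transport the algebraic CLT of Theorem \ref{theorem:cltjacobi} through the smooth diffeomorphism $T_N$, and then reduce the spectral statement to a direct verification that leans on known properties of Jacobi polynomials and their dual polynomials in the sense of de Boor--Saff.

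First, since $\tilde X_\kappa=T_N^{-1}(X_\kappa)$ and $T_N^{-1}$ is smooth near the interior point $z_N\in A_N$ (the zeros of $P_N^{(\alpha,\beta)}$ lie strictly inside $(-1,1)$), the delta method applied to Theorem \ref{theorem:cltjacobi} yields
\begin{equation*}
\sqrt{\kappa}\bigl(\tilde X_\kappa - T_N^{-1}(z_N)\bigr) = (DT_N)(t_0)^{-1}\,\sqrt{\kappa}(X_\kappa - z_N)+o_P(1)
\end{equation*}
with $t_0:=T_N^{-1}(z_N)$. A direct computation gives $(DT_N)(t_0)=\operatorname{diag}(-2\sin 2t_{0,j})=-D$ for $D=\operatorname{diag}(2\sqrt{1-z_{j,N}^2})$, so by the continuous mapping theorem the limit is $N(0,D^{-1}\Sigma_N D^{-1})$. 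Inversion produces the relation (\ref{connection-covariances}), and the explicit entries $\tilde s_{i,j}$ then follow from the matrix product $(DS_ND)_{i,j}=D_{ii}\,s_{i,j}\,D_{jj}$, using the elementary identity $(1-z_{j,N}^2)/(1\mp z_{j,N})^2=(1\pm z_{j,N})/(1\mp z_{j,N})$ to simplify the diagonal contribution.

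The remaining step is to identify the spectrum of $\tilde S_N$. I would verify by direct substitution that $\tilde S_N v_k=\lambda_k v_k$. Once the common factor $\sqrt{1-z_{i,N}^2}$ is extracted from the $i$-th coordinate of both sides, the claim becomes an identity among the values $q_{k-1}(z_{j,N})$ of the orthonormal polynomials for the discrete weight $\mu_{N,\alpha,\beta}$. Sums of the form $\sum_{l\ne j}(z_{j,N}-z_{l,N})^{-2}$ appearing on the diagonal can be reduced via repeated differentiation of the Jacobi ODE for $P_N^{(\alpha,\beta)}$ at its zeros, in the spirit of the logarithmic-derivative identity $\sum_{l\ne j}(z_{j,N}-z_{l,N})^{-1}=(P_N^{(\alpha,\beta)})''(z_{j,N})/(2(P_N^{(\alpha,\beta)})'(z_{j,N}))$.

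The main obstacle will be matching the resulting second-order difference operator on the node set $\{z_{j,N}\}$ with the explicit quadratic spectrum $\lambda_k=2k(2N+\alpha+\beta+1-k)$; this is precisely where the de Boor--Saff duality \cite{BS, I, VZ} between the discrete-measure polynomials $q_k$ and the classical $P_k^{(\alpha,\beta)}$ enters, since it converts the three-term recursion for the $q_k$ into information expressible through Jacobi data and their eigenvalues under the Jacobi differential operator. This entire spectral computation has essentially been carried out in \cite{HV, AHV}, and I would expect to import their conclusion once the trigonometric rescaling by $D$ has been carefully tracked through the eigenvectors.
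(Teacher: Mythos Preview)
Your proposal is correct and in fact goes further than the paper itself: the paper does not prove Theorem~\ref{theoremclttrig} at all but simply states it with the remark ``see \cite{HV, AHV}'', treating it as a known result. Your delta-method derivation of the CLT in trigonometric coordinates, the computation of the Jacobian $(DT_N)(t_0)=-D$, and the resulting identity $\tilde S_N=DS_ND$ with its entrywise consequences are all accurate and constitute exactly the argument one would expect \cite{HV} to contain; for the eigenvalue/eigenvector statement you correctly identify that the substance lies in the de Boor--Saff duality computations already carried out in \cite{HV, AHV}, which is precisely what the paper invokes.
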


As explained in \cite{AHV}, the $(q_{k})_{k=0,\ldots,N-1}$  are the dual orthogonal polynomials of the Jacobi polynomials $P_N^{(\alpha,\beta)}$
  in the sense of de Boor and Saff \cite{BS}, and the covariance matrices $\tilde\Sigma_N=(\tilde \sigma_{i,j})_{i,j=1,\ldots ,N}$
  and thus $\Sigma_N=( \sigma_{i,j})_{i,j=1,\ldots ,N}$ above can be computed explicitly
  by matrix analysis.

 The aim of this paper is to derive a limit result for the variances $\sigma_{N+1-r,N+1-r}$ of the 
 $r$-th largest component of the algebraic frozen Jacobi ensembles (or, equivalent,  for the variances $\tilde\sigma_{N+1-r,N+1-r}$ of the 
 $r$-th smallest component of the trigonometric frozen Jacobi ensembles) for fixed $r\in \mathbb N$ and $N\to\infty$.
 These limits will be described in terms of $J_\alpha$  where these limits  are independent from $\beta$.
 We briefly recall e.g. from \cite{W} that $J_\alpha$  solves the differential equation
  \begin{align}\label{eq:BesselJHE}
		z^2\frac{d^2}{dz^2}y(z)+z\frac{d}{dz}y(z)+(z^2-\alpha^2)y(z)=0.
	\end{align}
We also recapitulate that the positive zeroes $j_{\alpha,r}$ of  $J_\alpha$ and $\theta_{r,N}$ of $P_N^{(\alpha,\beta)}(\cos\theta)$
  (with $\cos (2\theta_{r,N})=z_{N-r+1,N}$) 
  with $r\in\mathbb N$  are closely related. In particular, it is shown in \cite{G2} that for fixed $\alpha,\beta,r$ and for $N\to\infty$,
  \begin{equation}\label{asympt-zeros}
    \theta_{r,N}= \frac{j_{\alpha,r}}{\nu}+O(N^{-5}) \quad\text{with}\quad 
      \nu:=\sqrt{\left(N+\frac{\alpha+\beta+1}{2}\right)^2+\frac{1-\alpha^2-3\beta^2}{12}}=N+O(1).
\end{equation}
  This relation roughly explains the appearance of  $J_\alpha$ and its zeros in the limit in 
  Theorem \ref{asymptotics-jacobi-intro} which is part of the following result which is concerned with
  algebraic as well as trigonometric coordinates:

  \begin{theorem}\label{asymptotics-jacobi}
    For $\alpha,\beta>-1$ consider the covariance matrices $\tilde\Sigma_{N}=:\left(\tilde\sigma_{i,j}\right)_{i,j=1,...,N}$ and
 $\Sigma_{N}=:\left(\sigma_{i,j}\right)_{i,j=1,...,N}$
    of the trigonometric and algebraic
	$\beta$-Jacobi ensembles in the freezing regime. Then for $r,s\in\mathbb{N}$
	\begin{equation}\label{limit-jacobi-trigonometric}
	\lim\limits_{N\rightarrow\infty}{N^2}{\tilde \sigma_{N-r+1,N-s+1}}=
	\frac{1}{J_\alpha^\prime(j_{\alpha,r})J_\alpha^\prime(j_{\alpha,s})}
        \int_{0}^{1}\frac{y}{1-y^2}J_\alpha(j_{\alpha,r}y)J_\alpha(j_{\alpha,s}y) \> dy
	\end{equation}
and
    \begin{equation}\label{limit-jacobi-algebraic}
	\lim\limits_{N\rightarrow\infty}  \sigma_{N-r+1,N-s+1}=
	\frac{1}{4 j_{\alpha,r} j_{\alpha,s}
          J_\alpha^\prime(j_{\alpha,r}) J_\alpha^\prime(j_{\alpha,s})}
        \int_{0}^{1}\frac{y}{1-y^2}J_\alpha(j_{\alpha,r}y)J_\alpha(j_{\alpha,s}y) \> dy.
	\end{equation}    
  \end{theorem}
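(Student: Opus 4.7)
The approach is to prove the trigonometric statement (\ref{limit-jacobi-trigonometric}) first and to deduce (\ref{limit-jacobi-algebraic}) via the entrywise identity coming from (\ref{connection-covariances}). The starting point is the spectral decomposition of $\tilde S_N$ provided by Theorem \ref{theoremclttrig}: because $(q_k)_{k=0,\ldots,N-1}$ is orthonormal with respect to $\mu_{N,\alpha,\beta}$, the eigenvectors $v_k$ form an orthonormal basis, and hence
$$\tilde\sigma_{N-r+1,N-s+1}=\sqrt{(1-z_{N-r+1,N}^2)(1-z_{N-s+1,N}^2)}\sum_{k=1}^{N}\frac{q_{k-1}(z_{N-r+1,N})\,q_{k-1}(z_{N-s+1,N})}{2k(2N+\alpha+\beta+1-k)}.$$

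Next we extract the asymptotics of each piece. From (\ref{asympt-zeros}) one gets $\sqrt{1-z_{N-r+1,N}^2}=\sin(2\theta_{r,N})=2j_{\alpha,r}/N+O(N^{-2})$, so the prefactor is $4j_{\alpha,r}j_{\alpha,s}N^{-2}(1+o(1))$, providing the $N^{2}$ on the left-hand side of (\ref{limit-jacobi-trigonometric}). The core step is a Mehler--Heine type asymptotic for the dual polynomials at the edge nodes: using the classical formula $N^{-\alpha}P_N^{(\alpha,\beta)}(\cos(z/N))\to(z/2)^{-\alpha}J_\alpha(z)$ together with the de Boor--Saff duality between $(P_n^{(\alpha,\beta)})$ and $(q_k)$ \cite{BS}, one expects, under the bulk parametrization $x=k/N\in(0,1]$ and $y=\sqrt{1-x}$, an asymptotic of the form
$$q_{k-1}(z_{N-r+1,N})\sqrt{1-z_{N-r+1,N}^2}\ \approx\ \frac{\sqrt{2-x}}{\sqrt N}\,\frac{J_\alpha(j_{\alpha,r}\,y)}{J_\alpha'(j_{\alpha,r})}$$
and analogously at the $(N-s+1)$-th node. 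Substituting this, using the denominator asymptotic $2k(2N+\alpha+\beta+1-k)\sim 2N^2 x(2-x)$, the sum becomes a Riemann sum whose continuum limit under the change of variables $y=\sqrt{1-x}$ (so $dy=-dx/(2y)$) reduces to $\tfrac{1}{4j_{\alpha,r}j_{\alpha,s}J_\alpha'(j_{\alpha,r})J_\alpha'(j_{\alpha,s})}\int_0^1 y\,J_\alpha(j_{\alpha,r}y)J_\alpha(j_{\alpha,s}y)/(1-y^2)\,dy$. Combining with the $4j_{\alpha,r}j_{\alpha,s}/N^2$ prefactor yields (\ref{limit-jacobi-trigonometric}).

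The algebraic statement (\ref{limit-jacobi-algebraic}) then follows by applying the entrywise identity $\sigma_{i,j}=D_{ii}D_{jj}\tilde\sigma_{i,j}$ (from (\ref{connection-covariances})) together with the edge asymptotic $D_{N-r+1,N-r+1}\sim 4j_{\alpha,r}/N$. The main technical obstacle is the uniform-in-$k$ control required at the Mehler--Heine step: pointwise convergence of the dual polynomial product does not by itself allow exchanging the $N\to\infty$ limit with the summation over $k\in\{1,\ldots,N\}$, so one needs either an integrable majorant (for dominated convergence) or an explicit error bound uniform in $k$. This is exactly the exchange of limits which, as remarked in Section 3, is handled too loosely by Andraus in the Laguerre case \cite{A}; handling it rigorously requires supplementing Mehler--Heine with Plancherel--Rotach style bulk asymptotics for the Jacobi polynomials so as to produce the needed uniform majorant.
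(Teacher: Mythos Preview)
Your overall architecture---spectral decomposition of $\tilde S_N$, interpretation of $N^2\tilde\sigma_{N-r+1,N-s+1}$ as a Riemann sum, passage to an integral, and then deduction of the algebraic limit via \eqref{connection-covariances}---is exactly the one the paper uses. However, the proposal is not a proof: the two decisive technical steps are only named, not carried out.

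\smallskip
\textbf{The Mehler--Heine step.} You write that from the classical Mehler--Heine formula for $P_N^{(\alpha,\beta)}$ together with de Boor--Saff duality ``one expects'' an asymptotic of the form
\[
\sqrt{1-z_{N-r+1,N}^2}\,q_{k-1}(z_{N-r+1,N})\approx \tfrac{1}{\sqrt N}\,\sqrt{2-x}\,\frac{J_\alpha(j_{\alpha,r}\sqrt{1-x})}{J_\alpha'(j_{\alpha,r})}.
\]
But de Boor--Saff duality is a statement about three--term recurrences, not about asymptotics: there is no mechanism in \cite{BS,I,VZ} that transfers a Mehler--Heine limit from $(P_n^{(\alpha,\beta)})_n$ to the finite dual sequence $(q_k)_{k\le N-1}$. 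In the paper this is the content of Theorem~\ref{theorem:localconvJHE}, and it is proved by a completely different route: the dual three--term recurrence \eqref{eq:3termjacobidualnorm2} evaluated at $x=z_{N-r+1,N}$ is rewritten, after the expansions \eqref{eq:taylorJHEaNk}--\eqref{eq:taylorJHEbNk1}, as a second--difference relation; a double summation turns it into a perturbed Volterra integral equation \eqref{eq:integraleqfNJHE} for the step function $f_{N,r}$; and Gronwall's lemma then yields locally uniform convergence $|f_{N,r}-f^{(\alpha,r)}|=O(N^{-1})$ to the solution of the limiting ODE, which is identified with the Bessel expression \eqref{limit-besseltype}. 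No Mehler--Heine or Plancherel--Rotach input for Jacobi polynomials is used at all.

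\smallskip
\textbf{The exchange of limits.} You diagnose correctly that this is the crux, but the remedy you propose (supplement Mehler--Heine by Plancherel--Rotach bulk asymptotics to manufacture a dominating function) is neither carried out nor clearly feasible in this setting. The paper does not use dominated convergence. Instead, the quantitative rate $|f_{N,r}-f^{(\alpha,r)}|=O(N^{-1})$ from Theorem~\ref{theorem:localconvJHE} is combined with the elementary bound $h_N(y)\le\min\bigl(\tfrac N2,\tfrac{1}{2y}\bigr)$: one cuts $[0,1]$ at $2\delta$, controls the bulk piece $[2\delta,1]$ by uniform convergence, and on $[0,2\delta]$ uses $h_N\le N/2$ together with the $O(N^{-1})$ error and $f^{(\alpha,r)}(0)=0$ to show the contribution is $O(\delta)$ uniformly in $N$. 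The case $r\ne s$ is then obtained by polarization, invoking the $L^2$--orthonormality of the limit functions (Lemma~\ref{orthonormality-mod-bessel}). So the rate of convergence, not a majorant, is what closes the argument.

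\smallskip
Two minor remarks. First, your edge asymptotic $\sqrt{1-z_{N-r+1,N}^2}\sim 2j_{\alpha,r}/N$ and $D_{N-r+1,N-r+1}\sim 4j_{\alpha,r}/N$ are off by a factor of $2$ relative to \eqref{eq:zNJacobiasymptotics}; the paper's parenthetical ``$\cos(2\theta_{r,N})=z_{N-r+1,N}$'' near \eqref{asympt-zeros} is misleading and should read $\cos\theta_{r,N}=z_{N-r+1,N}$. Second, in your Riemann--sum computation the correct substitution to reach the final form is the affine one $y\mapsto 1-y$ (as in the paper), not $y=\sqrt{1-x}$; the extra square root is already built into the limit function $f^{(\alpha,r)}(y)\propto\sqrt{1-y}\,J_\alpha(j_{\alpha,r}(1-y))$.
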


  As a preparation for the proof, we need some facts about the dual orthogonal polynomials 
  of the Jacobi polynomials $(P_N^{(\alpha,\beta)})_{N\ge0}$ in the sense of De Boor and Saff \cite{BS, I, VZ}.
  For this we consider the  orthonormal  Jacobi polynomials
 $(\tilde P_N:=\tilde P_N^{(\alpha,\beta)})_{N\ge0}$
   w.r.t. to the
probability measure with Lebesgue density
$$\frac{\Gamma(\alpha+\beta+2)}{2^{\alpha+\beta+1}\Gamma(\alpha+1)\Gamma(\beta+1)}  (1-x)^\alpha(1+x)^\beta$$
on $[-1,1]$.
The three-term recurrence  and orthogonality relation of $(P_N^{(\alpha,\beta)})_{N\ge0}$ from 
Sections 4.5 and 4.3 of \cite{S} then yields that the $\tilde P_N^{(\alpha,\beta)}$
 satisfy the three-term recurrence
\begin{equation}\label{normal-generic-3-term}
\tilde{P}_0=1,\> \tilde{P}_1(x)=a_1^{-1}(x-b_0),\>
x\tilde{P}_{N}(x)=a_{N+1}\tilde{P}_{N+1}(x)+b_{N}\tilde{P}_{N}(x)+a_{N}\tilde{P}_{N-1}(x)
\quad(N\ge1)
\end{equation}
with
\begin{align}\label{recursion-jabobi-normalized}
	a_N:=a_{N,\alpha,\beta}:&=\frac{\sqrt{4N(N+\alpha+\beta)(N+\alpha)(N+\beta)}}{\sqrt{(2N+\alpha+\beta+1)(2N+\alpha+\beta-1)}(2N+\alpha+\beta)},\\
	b_N:=b_{N,\alpha,\beta}:&=-\frac{\alpha^2-\beta^2}{(2N+\alpha+\beta)(2N+\alpha+\beta+2)}	
	\notag	\end{align}
for $N\ge 1$ and some $b_0\in\mathbb R$.
We now fix $N$ and follow \cite{VZ, AHV}  (see in particular Lemmas 4.2 and 4.3 of \cite{AHV}) and define the dual 
polynomials $(\tilde{Q}_{k,N})_{k=0,..,N-1}$ by the  three-term recurrence
 \begin{align}\label{dual-recurrence-normal}
&\tilde{Q}_{0,N}=1,\> \tilde{Q}_{1,N}(x)=a_{N-1}^{-1}(x-b_{N-1}),\>\\
&        x\tilde{Q}_{k,N}(x)=a_{N-k-1}\tilde{Q}_{k+1,N}(x)+b_{N-k-1}\tilde{Q}_{k,N}(x)+a_{N-k}\tilde{Q}_{k-1,N}(x)\quad
(1\leq k\leq N-2).
        \notag\end{align}
        The polynomials $(\tilde{Q}_{k,N})_{k=0,..,N-1}$ then are
orthonormal w.r.t.~the discrete measure
$$\sum_{i=1}^Nw_i^*\delta_{z_{i,N}}$$
with the dual Christoffel numbers
\begin{align}\label{dual-Christoffel}
w_i^*=\frac{\tilde P_{N-1}(z_{i,N})}{b_N\tilde P_{N}^\prime(z_{i,N})}>0\quad
(i=1,\ldots,N)
\end{align}
which satisfy $\sum_{i=1}^Nw_i^*=1$. By Lemma 4.5 in  \cite{AHV}, the normalized eigenvectors of the inverse covariance matrices
$\tilde S_N$ in Theorem \ref{theoremclttrig} can be now described as follows:

\begin{lemma}\label{jacobi-eigenvectors}
The  orthonormal  eigenvectors of 
$\tilde S_N$ associated with the eigenvalues $\lambda_j$ are given  by the vectors
        \begin{align}
                \frac{1}{\sqrt{h_N}}(\sqrt{1-z_{1,N}^2}\tilde Q_{j-1,N}(z_{1,N}),\ldots,\sqrt{1-z_{N,N}^2}\tilde{Q}_{j-1,N}(z_{N,N}))^T,\
1\leq j\leq N,
        \end{align}
        with
    	$$h_N:=\frac{{4N(N+\alpha)(N+\beta)(N+\alpha+\beta)}}{{(2N+\alpha+\beta)}^2{(2N+\alpha+\beta-1)}}   .$$ 
\end{lemma}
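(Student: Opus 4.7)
The plan is to relate the orthonormal eigenvectors claimed here to those already identified in Theorem~\ref{theoremclttrig}, reducing everything to a comparison between two discrete orthogonal polynomial systems supported on $\{z_{1,N},\ldots,z_{N,N}\}$. First, recall from Theorem~\ref{theoremclttrig} that the $\lambda_k$-eigenspace of $\tilde S_N$ is one-dimensional and spanned by $v_k=(q_{k-1}(z_{i,N})\sqrt{1-z_{i,N}^2})_{i=1}^N$, where $(q_k)_{k=0,\ldots,N-1}$ are the orthonormal polynomials with respect to $\mu_{N,\alpha,\beta}=\sum_{i=1}^N(1-z_{i,N}^2)\delta_{z_{i,N}}$. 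Since the eigenvalues $\lambda_k=2k(2N+\alpha+\beta+1-k)$ are pairwise distinct, the entire content of the lemma reduces (up to a global sign fixed by the convention on leading coefficients) to the pointwise identity
\[
q_{k-1}(z_{i,N}) \;=\; \frac{1}{\sqrt{h_N}}\,\tilde Q_{k-1,N}(z_{i,N}) \qquad (i=1,\ldots,N,\ k=1,\ldots,N).
\]

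Second, I would obtain this identity by comparing the two underlying discrete measures. By construction (Lemmas~4.2--4.3 of \cite{AHV}), the dual polynomials $\tilde Q_{k,N}$ are orthonormal with respect to the probability measure $\sum_{i=1}^N w_i^*\delta_{z_{i,N}}$. If one can verify that this measure and $\mu_{N,\alpha,\beta}$ are positively proportional with ratio $h_N$, i.e.
\[
1-z_{i,N}^2 \;=\; h_N\,w_i^* \qquad (i=1,\ldots,N),
\]
then the standard rescaling rule for orthonormal polynomials under scalar multiples of the measure --- if $\mu=c\mu'$ with $c>0$, the orthonormal polynomials for $\mu$ equal $1/\sqrt{c}$ times those for $\mu'$ --- immediately identifies $q_{k-1}$ with $\tilde Q_{k-1,N}/\sqrt{h_N}$ on $\{z_{i,N}\}$, since orthonormal polynomials on $N$ distinct points are uniquely determined (up to sign) by their degrees and by the measure.

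Third, I would verify $1-z_{i,N}^2=h_N w_i^*$ using classical identities for the Jacobi polynomials. Substituting the definition (\ref{dual-Christoffel}) of $w_i^*$, this takes the form
\[
(1-z_{i,N}^2)\,\tilde P_N^\prime(z_{i,N}) \;=\; \frac{h_N}{b_N}\,\tilde P_{N-1}(z_{i,N}),
\]
which is a Gauss--Jacobi quadrature weight formula for the orthonormal family $(\tilde P_N^{(\alpha,\beta)})$. It can be derived by combining the Jacobi differential equation evaluated at the zero $z_{i,N}$, the three-term recurrence (\ref{normal-generic-3-term})--(\ref{recursion-jabobi-normalized}), and the Christoffel--Darboux kernel expression for the Gauss quadrature weights.

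The main obstacle is this last bookkeeping step. Three different normalizations of the Jacobi polynomials interact in the argument: the classical $P_N^{(\alpha,\beta)}$ through the zeros $z_{i,N}$, the orthonormal $\tilde P_N^{(\alpha,\beta)}$ through (\ref{dual-Christoffel}), and the dual orthonormal $\tilde Q_{k,N}$ through (\ref{dual-recurrence-normal}). One must track the explicit recurrence coefficients $a_N,b_N$ from (\ref{recursion-jabobi-normalized}) carefully so that the ratio $(1-z_{i,N}^2)/w_i^*$ collapses to exactly $h_N=4N(N+\alpha)(N+\beta)(N+\alpha+\beta)/[(2N+\alpha+\beta)^2(2N+\alpha+\beta-1)]$, rather than to some algebraically equivalent but superficially different expression that would obscure both the positivity of the constant and its later role in the hard-edge asymptotics of Theorem~\ref{asymptotics-jacobi}.
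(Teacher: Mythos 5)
The paper does not actually prove this lemma---it is imported verbatim as Lemma 4.5 of \cite{AHV}---so your argument is necessarily a different route, and it is a sound one. Your reduction is exactly right: the eigenvalues $\lambda_k=2k(2N+\alpha+\beta+1-k)$ are pairwise distinct (for distinct $k,k'\le N$ one has $k+k'\le 2N-1<2N+\alpha+\beta+1$, so $\lambda_k=\lambda_{k'}$ would force $k=k'$), hence the eigenspaces are one-dimensional and, by Theorem \ref{theoremclttrig}, the whole statement collapses to the proportionality of the two discrete orthogonality measures $\mu_{N,\alpha,\beta}$ and $\sum_i w_i^*\delta_{z_{i,N}}$ with ratio $h_N$. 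One suggestion that removes most of the bookkeeping you identify as the main obstacle: decouple the pointwise proportionality from the value of the constant. Szeg\"o's structure relation (4.5.7 in \cite{S}), $(2N+\alpha+\beta)(1-x^2)\tfrac{d}{dx}P_N^{(\alpha,\beta)}(x)=N\bigl[\alpha-\beta-(2N+\alpha+\beta)x\bigr]P_N^{(\alpha,\beta)}(x)+2(N+\alpha)(N+\beta)P_{N-1}^{(\alpha,\beta)}(x)$, evaluated at a zero $z_{i,N}$ gives $(1-z_{i,N}^2)P_N^{(\alpha,\beta)\prime}(z_{i,N})\propto P_{N-1}^{(\alpha,\beta)}(z_{i,N})$ with a constant independent of $i$, which already yields $1-z_{i,N}^2=c\,w_i^*$ for some $c>0$ in any normalization. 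Since $\sum_i w_i^*=1$, the constant is $c=\sum_i(1-z_{i,N}^2)=N-\operatorname{tr}(J_N^2)=N-\sum_{k=0}^{N-1}b_k^2-2\sum_{k=1}^{N-1}a_k^2$, where $J_N$ is the truncated Jacobi matrix, and a direct computation with \eqref{recursion-jabobi-normalized} shows this equals $a_N^2(2N+\alpha+\beta+1)=h_N$.

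One caution about your displayed target identity $(1-z_{i,N}^2)\,\tilde P_N^{\prime}(z_{i,N})=\frac{h_N}{b_N}\,\tilde P_{N-1}(z_{i,N})$: taken literally it cannot be correct, because $b_N$ as defined in \eqref{recursion-jabobi-normalized} vanishes when $\alpha=\beta$, while the left-hand side does not (and \eqref{dual-Christoffel} would then not even define positive weights). The denominator in \eqref{dual-Christoffel} has to be the off-diagonal recurrence coefficient $a_N$, not $b_N$; with $a_N$ in its place one checks, using $h_N=a_N^2(2N+\alpha+\beta+1)$ and the ratio of the $L^2$-norms of $P_{N-1}^{(\alpha,\beta)}$ and $P_N^{(\alpha,\beta)}$, that the constant is exactly $h_N/a_N$, closing the argument. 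You inherit this typo from the paper, so it is not a flaw in your reasoning, but if you run the bookkeeping with $b_N$ as printed you will reach a contradiction in the ultraspherical case.
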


In summary, if we define the slightly renormalized orthogonal polynomials
$$Q_{j,N}:=\frac{1}{\sqrt{h_N}}\tilde Q_{j,N} \quad\quad (j=0,..,N-1),$$
we obtain 
$$T_N^T\tilde\Sigma_N T_N=\operatorname{diag}\left(\frac{1}{\lambda_1},\frac{1}{\lambda_2},...,\frac{1}{\lambda_N}\right)$$
with
$$\lambda_k=2k(2N+\alpha+\beta+1-k)>0 \quad(k=1,\ldots,N)$$
and the orthogonal matrix
\begin{align}\label{eq:TNorthogonalJHE}	
	T_N:= \left(\sqrt{1-z_{i,N}^2}Q_{j-1,N}(z_{i,N})\right)_{i,j=1,\ldots,N}
	\end{align}
where the sequence $({Q}_{k,N})_{k=0,..,N-1}$ satisfies
	\begin{align}\label{eq:3termjacobidualnorm2}
	xQ_{k,N}(x)&=a_{N-k}Q_{k-1,N}(x)+b_{N-k-1}Q_{k,N} (x)
	+a_{N-k-1 }Q_{k+1,N}x) \quad(k=0,\ldots,N-2)\\
        Q_{-1,N}&=0, \quad Q_{0,N}={\frac{1}{\sqrt{h_{N,\alpha,\beta}}}}.
	\notag\end{align}
The following result is the main ingredient of the proof of  Theorem \ref{asymptotics-jacobi}.

\begin{theorem} \label{theorem:localconvJHE}
	Let $\alpha,\beta>-1$, and $r\in\mathbb{N}$. For  $N\in\mathbb{N}$ with $N>r$ define the step functions
	\begin{align}\label{def:fNJHE}
	f_N(y):=f_{N,r}(y):=\sqrt{N}\sum_{k=0}^{N-1}\sqrt{1-z_{N-r+1,N}^2}Q_{k,N}(z_{N-r+1,N}){\bf 1}_{[\frac{k}{N},\frac{k+1}{N}[}(y).
	\end{align}
        as well as the function
\begin{equation}\label{limit-besseltype}
                  f^{(\alpha, r)}(y):= \frac{-\sqrt 2}{J_\alpha^\prime(j_{\alpha,r})} \sqrt{1-y}\cdot J_\alpha(j_{\alpha,r}(1-y)).
                  \end{equation}
        for $y\in[0,1[$. Then, for $N\to\infty$,
$|f_N-f|=O(N^{-1})$
locally uniformly on $[0,1[$.\end{theorem}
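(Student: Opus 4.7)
My plan is to treat the three-term recurrence (\ref{eq:3termjacobidualnorm2}) for $Q_{k,N}$ evaluated at $x=z_{N-r+1,N}$ as a discretization, in the rescaled variable $u=(N-k)/N=1-y$, of a second-order ODE whose physically relevant solution is $\sqrt{u}\,J_\alpha(j_{\alpha,r}u)$. After identifying the ODE, the prescribed initial data $Q_{-1,N}=0$, $Q_{0,N}=1/\sqrt{h_{N,\alpha,\beta}}$ fix the multiplicative constant, and a Gronwall-type estimate upgrades the pointwise limit to the rate $O(N^{-1})$.

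First I expand everything to second order. From (\ref{recursion-jabobi-normalized}) one checks, for $M\to\infty$,
$a_M=\tfrac12+\tfrac{1-2(\alpha^2+\beta^2)}{16 M^{2}}+O(M^{-3})$ and $b_M=-\tfrac{\alpha^2-\beta^2}{4M^{2}}+O(M^{-3})$; from (\ref{asympt-zeros}) together with $\cos(2\theta)=1-2\sin^{2}\theta$ and the classical Szeg\H{o}/Gatteschi refinement, $z_{N-r+1,N}=1-j_{\alpha,r}^{\,2}/(2N^{2})+O(N^{-3})$ and $\sqrt{1-z_{N-r+1,N}^{2}}=j_{\alpha,r}/N+O(N^{-2})$. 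Putting $\psi_N((N-k)/N):=N^{-1/2}Q_{k,N}(z_{N-r+1,N})$, inserting the recurrence and Taylor-expanding $\psi_N(u\pm 1/N)$, the leading $\tfrac{1}{2N^{2}}\psi_N''(u)$ balances $(x-a_M-a_{M-1}-b_{M-1})\psi_N(u)$; after multiplying by $2N^{2}$ the $\beta$-dependent pieces cancel and one obtains
\[
\psi_N''(u)+\Bigl(j_{\alpha,r}^{\,2}+\frac{1/4-\alpha^{2}}{u^{2}}\Bigr)\psi_N(u)=O(1/N),
\]
locally uniformly on compact subsets of $(0,1]$. This is precisely the ODE satisfied by $\sqrt{u}\,J_\alpha(j_{\alpha,r}u)$ and $\sqrt{u}\,Y_\alpha(j_{\alpha,r}u)$; only the $J_\alpha$-branch stays bounded at $u=0$, and the $Y_\alpha$-branch is ruled out because $\psi_N$ is uniformly bounded on compact subsets of $(0,1]$.

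To fix the multiplicative constant I use the data $Q_{0,N}=1/\sqrt{h_{N,\alpha,\beta}}=\sqrt{2/N}(1+O(N^{-1}))$ and compute $Q_{1,N}(z_{N-r+1,N})$ from the recurrence at $k=0$, giving $\psi_N(1)$ and $\psi_N(1-1/N)$ to order $N^{-1}$. The Taylor expansion $J_\alpha(j_{\alpha,r}(1-s))=-s\,j_{\alpha,r}J_\alpha^\prime(j_{\alpha,r})+O(s^{2})$ at the zero then forces $\psi_N(u)\to \tfrac{-\sqrt{2}}{j_{\alpha,r}J_\alpha^\prime(j_{\alpha,r})}\sqrt{u}\,J_\alpha(j_{\alpha,r}u)$, and the identity $f_N(y)=N\sqrt{1-z_{N-r+1,N}^{2}}\,\psi_N(1-y)$ together with $N\sqrt{1-z_{N-r+1,N}^{2}}\to j_{\alpha,r}$ reproduces $f^{(\alpha,r)}$ with the announced constant $-\sqrt{2}/J_\alpha^\prime(j_{\alpha,r})$. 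The rate is closed by a stability argument: the error $e_N:=f_N-f^{(\alpha,r)}$ satisfies the same recurrence up to a forcing of size $O(N^{-1})$ with initial data matched to $O(N^{-1})$, so a discrete Gronwall estimate (equivalently, variation-of-constants against the fundamental pair $\sqrt{u}J_\alpha(j_{\alpha,r}u),\sqrt{u}Y_\alpha(j_{\alpha,r}u)$) yields $|e_N|=O(N^{-1})$ uniformly on each $[0,1-\eta]$.

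The principal obstacle is the constant-matching step: the $\sqrt{N}$, $\sqrt{1-z_{N-r+1,N}^{2}}$ and $1/\sqrt{h_{N,\alpha,\beta}}$ scalings must all be tracked together with the linear Taylor approximation of $J_\alpha$ at its zero so that the precise constant $-\sqrt{2}/J_\alpha^\prime(j_{\alpha,r})$ and the clean cancellation of $\beta$ emerge rather than an $O(1)$ multiplicative error. A secondary difficulty is the singular potential $1/u^{2}$ as $u\to 0^{+}$ (i.e.\ $y\to 1$), which restricts both the ODE derivation and the Gronwall propagation to compact subsets of $[0,1)$; this is exactly why the theorem is stated with convergence only locally uniform on $[0,1[$.
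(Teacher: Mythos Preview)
Your strategy matches the paper's: rewrite the three-term recurrence (\ref{eq:3termjacobidualnorm2}) at $x=z_{N-r+1,N}$ as a perturbation of the Bessel-type ODE (\ref{eq:ODEJHEwithinproof}), match the initial data at $y=0$, and close with a Gronwall estimate. The paper makes the passage to the ODE rigorous by double-summing the second difference $q_{k+1}-2q_k+q_{k-1}$ into an integral equation (\ref{eq:integraleqfNJHE}) for the step function $f_N$; your ``Taylor-expand $\psi_N(u\pm 1/N)$'' is a heuristic shortcut for the same computation and would need to be replaced by such a summation (or an honest consistency-plus-stability argument for the discrete scheme) to become a proof.

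There is, however, a genuine gap in your rate argument. You assert that the forcing driving $e_N=f_N-f^{(\alpha,r)}$ is $O(N^{-1})$, but the only a priori control on $q_k:=Q_{k,N}(z_{N-r+1,N})$ comes from the orthogonality of $T_N$, namely $(1-z_{N-r+1,N}^2)\sum_k q_k^2=1$, which together with $1-z_{N-r+1,N}^2\sim j_{\alpha,r}^2/N^2$ yields only $q_k=O(N)$. Feeding this into the $O(N^{-3})q_k$ remainders of the coefficient expansions and double-summing, the error in the integral equation for $f_N$ is $O(N^{-1/2})$, not $O(N^{-1})$. The paper recovers the stated rate by a bootstrap: Gronwall first gives $|f_N-f|=O(N^{-1/2})$, hence $f_N$ is locally bounded, hence $q_k=O(\sqrt N)$; rerunning the same estimates with this improved input then delivers $O(N^{-1})$. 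Without this iteration your argument stops at $O(N^{-1/2})$.

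A smaller point: eliminating the $Y_\alpha$-branch ``because $\psi_N$ is uniformly bounded on compact subsets of $(0,1]$'' does not work, since on compacta bounded away from $u=0$ the function $\sqrt{u}\,Y_\alpha(j_{\alpha,r}u)$ is bounded as well. The selection is in fact effected by the initial data at $u=1$ (i.e.\ $y=0$): since $J_\alpha(j_{\alpha,r})=0$ while $Y_\alpha(j_{\alpha,r})\neq 0$, the condition $f(0)=0$ already kills the $Y_\alpha$-coefficient, and $f'(0)=\sqrt 2\,j_{\alpha,r}$ then fixes the remaining constant. This is exactly what the paper's integral equation (\ref{eq:integraleqJHE}) encodes, so your separate boundedness argument is both incorrect and unnecessary.
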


\begin{proof}
  The main idea of the proof is to rewrite the three-term recursion \eqref{eq:3termjacobidualnorm2} as a perturbated integral equation and to
apply Gronwall's lemma. In a second step we then solve the integral equation.

In the following considerations, let $y\in]0,1[$ and $k$ an integer with $0\leq k\leq \lfloor yN\rfloor$.
All Landau symbols will be taken w.r.t.~$N\to\infty$. We then in particular have
$$ \frac{1}{1-\frac{k}{N}}=O(1) \quad\text{and}\quad N-k\to\infty.$$

If we use the relation (\ref{asympt-zeros}) between the increasing
positive zeroes $j_{\alpha,r}$ of  $J_\alpha$ and $\theta_{r,N}$ of $P_N^{(\alpha,\beta)}(\cos\theta)$, we obtain for the associated zeros
$z_{N-r+1,N}$ of  $P_N^{(\alpha,\beta)}(z)$ in reversed order that
\begin{align}\label{eq:zNJacobiasymptotics}
z_{N-r+1,N}&=1-\frac{j_{\alpha,r}^2}{2N^2}+O(N^{-3}) ,
 \\ 1-z_{N-r+1,N}^2&=\frac{j_{\alpha,r}^2}{N^2}+O(N^{-3}) \quad\text{and}\quad
\sqrt{1-z_{N-r+1,N}^2}=\frac{j_{\alpha,r}}{N}+O(N^{-2}).\notag\end{align}

We next analyze the asymptotic behavior of the coefficients $a_{N-k},a_{N-k-1},b_{N-k-1}$ in the three-term recurrence
(\ref{eq:3termjacobidualnorm2}). By (\ref{recursion-jabobi-normalized}) we may write $a_N$ as $a_N=\sqrt{ \tilde a_N}$ with
$$\tilde a_N:= \frac{4N(N+\alpha+\beta)(N+\alpha)(N+\beta)}{(2N+\alpha+\beta+1)(2N+\alpha+\beta-1)(2N+\alpha+\beta)^2}.$$
Hence, after some standard manipulations,
\begin{align}
 \tilde a_N =& \frac{(1+(\alpha+\beta)/N)(1+\alpha/N)(1+\beta/N)}{4(1+(\alpha+\beta+1)/(2N))(1+(\alpha+\beta-1)/(2N))(1+(\alpha+\beta)/(2N))^2}\notag\\
    =&	\frac{1}{4}\Bigl(1-\frac{2(\alpha^2+\beta^2)-1}{4N^2}+O(N^{-3})\Bigr).
\end{align}
Therefore,
\begin{align}\label{eq:taylorJHEaNk}
	2a_{N-k}&=1-\frac{2(\alpha^2+\beta^2)-1}{8N^2\lkn^2}+O(N^{-3}),\notag\\
	2a_{N-k-1}&=1-\frac{2(\alpha^2+\beta^2)-1}{8N^2\left(1-\frac{k+1}{N}\right)^2}+O(N^{-3}).
\end{align}
In a similar way,   
\begin{equation}\label{eq:taylorJHEbNk1}
	2b_{N-k-1}=-\frac{\alpha^2-\beta^2}{2N^2}\frac{1}{\lkn^2}+O(N^{-3}).
\end{equation}
If we use \eqref{eq:zNJacobiasymptotics},\eqref{eq:taylorJHEaNk}, and \eqref{eq:taylorJHEbNk1}, and if we use the abbreviation
$$q_k:= Q_{k,N}(z_{N-r+1,N}),$$
we  rewrite
 the three-term recurrence relation \eqref{eq:3termjacobidualnorm2} for $x=z_{N-r+1,N}$ as
\begin{align*}
	2\left(1-\frac{j_{\alpha,r}^2}{2N^2}+O(N^{-3})\right)q_k&=q_{k-1}\left(1-\frac{2(\alpha^2+\beta^2)-1}{8N^2\left(1-\frac{k}{N}\right)^2}+O(N^{-3})\right)\\
	&-q_k\left(\frac{\alpha^2-\beta^2}{2N^2}\frac{1}{\lkn^2}+O(N^{-3})\right)\\
	&+q_{k+1}\left(1-\frac{2(\alpha^2+\beta^2)-1}{8N^2\left(1-\frac{k+1}{N}\right)^2}+O(N^{-3})\right).
\end{align*}
If we rearrange the terms in this equation, we obtain
\begin{align}\label{eq:3termJHEtaylored}
	q_{k+1}-2q_k+q_{k-1}&=\frac{q_{k-1}}{8N^2}\frac{2(\alpha^2+\beta^2)-1}{\lkn^2}+\frac{q_k}{2N^2}\frac{\alpha^2-\beta^2}{\lkn^2}\notag\\
	&+\frac{q_{k+1}}{8N^2}\frac{2(\alpha^2+\beta^2)-1}{\left(1-\frac{k+1}{N}\right)^2}-\frac{q_k}{N^2}j_{\alpha,r}^2\\
&+O(N^{-3})q_{k-1}+O(N^{-3})q_{k}+O(N^{-3})q_{k}.
\notag	
\end{align}
We next want to take a double sum of \eqref{eq:3termJHEtaylored}  in order to get an integral equation in the limit.
For this, some estimations are necessary.
The Cauchy-Schwarz inequality, \eqref{eq:zNJacobiasymptotics} and the orthogonality of \eqref{eq:TNorthogonalJHE} lead to 
\begin{align}\label{eq:JHEerrorsumqk}
  \sum_{k=0}^{N-1}|q_k|\leq\sqrt{\sum_{k=0}^{N-1}q_k^2}\sqrt{\sum_{k=0}^{N-1}1}={\frac{1}{\sqrt{1-z_{N-r+1,N}^2}}}\sqrt{N}=O(N^{\frac 32})
\end{align}
and $1=\sum_{i=1}^{N}(1-z_r^2)q_k^2$. In particular, we have 
\begin{equation}\label{eq:JHEerrqk} q_k=O({N})\end{equation}
uniformly in $k$.
The estimation \eqref{eq:JHEerrorsumqk} and $q_{-1}=0$ in particular ensure that the summation over the terms in the last line of
\eqref{eq:3termJHEtaylored}  satisfy
\begin{align}\label{eq:JHEsomeerrorterms}
\sum_{k=0}^{l}\big(O(N^{-3}) q_{k-1}+O(N^{-3}) q_k+O(N^{-3}) q_{k+1}\big)=O(N^{-\frac32}),\notag\\
\sum_{l=0}^{m-1}\sum_{k=0}^{l}\big(O(N^{-3})q_{k-1}+O(N^{-3})q_k+O(N^{-3})q_{k+1}\big)=O(N^{-\frac12})
\end{align}
uniformly for all $l,m\le \lfloor yN\rfloor$.
On the other hand, as $q_{-1}=0$ and $q_0=\sqrt 2/\sqrt N$,
for $0\leq m\leq \lfloor yN\rfloor$, the 
 left-hand side in \eqref{eq:3termJHEtaylored} satisfies
\begin{align}\label{eq:doublesumLHSJHE}
	\begin{split}\sum_{l=0}^{m-1}\sum_{k=0}^{l}\left(q_{k+1}-2q_k+q_{k-1}\right)&=\sum_{l=0}^{m-1}\left((q_{l+1}-q_l)-(q_0-q_{-1})\right)\\
	&=q_m-{(m+1)}q_0\\&=q_m-\frac{\sqrt 2(m+1)}{\sqrt{N}}+O(1/N).\end{split}
\end{align}
for $0\leq m\leq \lfloor yN\rfloor$,
We next consider the the first two lines of the right hand side of \eqref{eq:3termJHEtaylored}.
For this we use
\begin{align*}
	\frac{1}{\left(1-\frac{k+1}{N}\right)^2}=\frac{1}{\lkn^2}+O(N^{-1}).
\end{align*}
This and (\ref{eq:JHEerrqk}) then imply that
for
  $0\leq l\leq\lfloor yN\rfloor$,
\begin{align}\label{eq:indexshiftRHSJHE1}\begin{split}
	\sum_{k=0}^{l}\frac{q_{k-1}}{8N^2}&\frac{2(\alpha^2+\beta^2)-1}{\lkn^2}\\=&\sum_{k=0}^{l}\left(	\frac{q_{k-1}}{8N^2}\frac{2(\alpha^2+\beta^2)-1}{\left(1-\frac{k-1}{N}\right)^2}+O(N^{-2})\right)\\
	=&\sum_{k=0}^{l}\left(	\frac{q_{k}}{8N^2}\frac{2(\alpha^2+\beta^2)-1}{\left(1-\frac{k}{N}\right)^2}\right)-\frac{q_{l}}{8N^2}\frac{2(\alpha^2+\beta^2)-1}{\left(1-\frac{l}{N}\right)^2}+O(N^{-1})\\
	=&\sum_{k=0}^{l}\left(	\frac{q_{k}}{8N^2}\frac{2(\alpha^2+\beta^2)-1}{\left(1-\frac{k}{N}\right)^2}\right)+O(N^{-1}).
\end{split}\end{align}
Furthermore,
\begin{align}\label{eq:indexshiftRHSJHE2}
\begin{split}
	\sum_{k=0}^{l}\frac{q_{k+1}}{8N^2}&\frac{2(\alpha^2+\beta^2)-1}{\left(1-\frac{k+1}{N}\right)^2}\\
	=&\sum_{k=0}^{l}\frac{q_{k}}{8N^2}\frac{2(\alpha^2+\beta^2)-1}{\left(1-\frac{k}{N}\right)^2}+\frac{q_{l+1}}{8N^2}\frac{2(\alpha^2+\beta^2)-1}{\left(1-\frac{k+1}{N}\right)^2}-\frac{q_{0}}{8N^2}\frac{2(\alpha^2+\beta^2)-1}{\left(1-\frac{0}{N}\right)^2}\\
	=&\sum_{k=0}^{l}\frac{q_{k}}{8N^2}\frac{2(\alpha^2+\beta^2)-1}{\left(1-\frac{k}{N}\right)^2}+O(N^{-1}).
\end{split}\end{align}
We now use \eqref{eq:JHEsomeerrorterms}, \eqref{eq:indexshiftRHSJHE1} and \eqref{eq:indexshiftRHSJHE2} and abtain 
\begin{align}\label{eq:doublesumRHSJHE}
\begin{split}
  &\sum_{l=0}^{m-1}\sum_{k=0}^{l}\frac{q_{k}}{8N^2}\frac{2(\alpha^2+\beta^2)-1}{\lkn^2}+
  \sum_{l=0}^{m-1}\sum_{k=0}^{l}\frac{q_k}{2N^2}\frac{\alpha^2-\beta^2}{\lkn^2}\\
+&\sum_{l=0}^{m-1}\sum_{k=0}^{l}\frac{q_{k}}{8N^2}\frac{2(\alpha^2+\beta^2)-1}{\left(1-\frac{k}{N}\right)^2}-\sum_{l=0}^{m-1}\sum_{k=0}^{l}\frac{q_k}{N^2}j_{\alpha,r}^2+O(1)\\
&\quad =\sum_{l=0}^{m-1}\sum_{k=0}^{l}\frac{q_{k}}{N^2}\left(\frac{\alpha^2-\frac14}{\lkn^2}-j_{\alpha,r}^2\right)+O(1).
\end{split}\end{align}
In order to obtain a relation for the step functions $f_N$ in \eqref{def:fNJHE}, we multiply both sides in 
\eqref{eq:doublesumLHSJHE} and \eqref{eq:doublesumRHSJHE}
 with
\begin{align}\label{eq:asymptotic_zeros}\sqrt{N}\sqrt{1-z_{N-r+1,N}^2}=\frac{j_{\alpha,r}}{\sqrt{N}}+O(N^{-\frac32}).\end{align}
\eqref{eq:doublesumLHSJHE}, \eqref{eq:doublesumRHSJHE}, and \eqref{eq:3termJHEtaylored} then lead to
\begin{equation}\label{eq:3termtayloredJHE}
	\sqrt{N}\sqrt{1-z_r^2}q_m-j_{\alpha,r}\frac{\sqrt 2(m+1)}{N}+O(N^{-\frac12})
  =\sum_{l=0}^{m-1}\sum_{k=0}^{l}\frac{\sqrt{N}\sqrt{1-z_r^2}q_{k}}{N^2}\left(\frac{\alpha^2-\frac14}{\lkn^2}-j_{\alpha,r}^2\right).
\end{equation}
We no  rewrite \eqref{eq:3termtayloredJHE} as an integral equation for $f_N(y)$.
For fixed $y\in (0,1)$ consider the integers $m=m(y,N)$ with
$$\frac{m(y,N)}{N}\leq y<\frac{m(y,N)+1}{N}.$$
We now use the Landau symbol $O(N^{-j})$  for $N\to\infty$ such that the results hold  locally uniformly
 with respect to $y$. With the help of \eqref{eq:JHEerrqk} we have
\begin{align}
\begin{split}\label{eq:errorJHE}
\frac{m+1}{N}&=y+\left(\frac{m+1}{N}-y\right)=y+O(N^{-1}),\\
1-\frac{m}{N}&=1-y+y-\frac{m}{N}=1-y+O(N^{-1}),\\
\frac{1}{1-\frac{m}{N}}&=\frac{1}{1-y+O(N^{-1})}=\frac{1}{1-y}+O(N^{-1}).
\end{split}
\end{align}
We now claim that the double sum in \eqref{eq:3termtayloredJHE} can be written as an double integral as follows:
\begin{align}\begin{split}\label{eq:errordoublesumJHE}
		&\frac{1}{N^2}\sum_{l=0}^{m-1}\sum_{k=0}^{l}{\sqrt{N}\sqrt{1-z_r^2}q_{k}}\left(\frac{\alpha^2-\frac14}{\lkn^2}-j_{\alpha,r}^2\right)\\
		&\quad =\int_{0}^{y}\int_{0}^{s}{f_N(t)}\left(\frac{\alpha^2-\frac14}{(1-t)^2}-j_{\alpha,r}^2\right)dtds
		+O(N^{-1}).
	\end{split}
\end{align}
To show this consider the function
\begin{equation}g_N(t):=\sum_{k=0}^{N-1}t_{k,N}\mathbf 1_{\left[\frac{k}{N},\frac{k+1}{N}\right]}(t) \quad\text{with}\quad
 t_{k,N}:={\sqrt{N}\sqrt{1-z_r^2}q_{k}}\left(\frac{\alpha^2-\frac14}{\lkn^2}-j_{\alpha,r}^2\right).
\end{equation}
The double integral of $g_N$ then is given by
\begin{align*}
	\int_{0}^{y}\int_{0}^{s}g_N(t)dtds&=\int_{0}^{y}\int_{0}^{s}\sum_{k=0}^{N-1}t_{k,N}1_{\left[\frac{k}{N},\frac{k+1}{N}\right]}(t)dt ds\\
	&=\sum_{k=0}^{\lfloor yN\rfloor -1}\left(t_{k,N}\int_{\frac{k}{N}}^{\frac{k+1}{N}}\int_{\frac{k}{N}}^{s}1dtds+t_{k,N}\int_{\frac{k+1}{N}}^{y}\int_{\frac{k}{N}}^{\frac{k+1}{N}}1dtds\right)	\\
	&\quad +t_{\lfloor yN \rfloor,N}\int_{\frac{\lfloor yN \rfloor}{N}}^{y}\int_{\frac{\lfloor yN \rfloor}{N}}^{s}1dt ds\\
	&=\sum_{k=0}^{\lfloor yN\rfloor -1}\left(t_{k,N}\left(\frac{1}{2N^2}+\frac{1}{N}\left(y-\frac{k+1}{N}\right)\right)\right)\\
	&\quad +\frac12\left(\frac{yN-\lfloor yN\rfloor}{N}\right)^2t_{\lfloor yN\rfloor,N}\\
	&=\frac{1}{N^2}\sum_{k=0}^{\lfloor yN\rfloor -1}(\lfloor yN\rfloor-k)t_{k,N}+\frac{yN-\lfloor yN\rfloor -\frac12}{N}\frac1N\sum_{k=0}^{\lfloor yN\rfloor -1}t_{k,N}\\
	&\quad  +\frac12\left(\frac{yN-\lfloor yN\rfloor}{N}\right)^2t_{\lfloor yN\rfloor,N}.
\end{align*}
This, together with the identity
\begin{align*}
	\sum_{l=0}^{\lfloor yN\rfloor -1}\sum_{k=0}^{l}t_{k,N}=\sum_{k=0}^{\lfloor yN\rfloor-1}(\lfloor yN\rfloor-k)t_{k,N}
\end{align*}
yields that
\begin{align*}
		&\int_{0}^{y}\int_{0}^{s}g_N(t)dtds-\frac{1}{N^2}\sum_{l=0}^{\lfloor yN\rfloor -1}\sum_{k=0}^{l}t_{k,N}\\
		&=\frac12\left(\frac{yN-\lfloor yN\rfloor}{N}\right)^2t_{\lfloor yN\rfloor,N}+\frac{yN-\lfloor yN\rfloor -\frac12}{N}\frac1N\sum_{k=0}^{\lfloor yN\rfloor -1}t_{k,N}\\
		&=O(N^{-1}).
\end{align*}
Note that for the last equality, the estimation \eqref{eq:JHEerrorsumqk},\eqref{eq:JHEerrqk} and  \eqref{eq:asymptotic_zeros} were used. This shows \eqref{eq:errordoublesumJHE}.

With the error bound \eqref{eq:errorJHE},  we obtain from \eqref{eq:3termtayloredJHE} and \eqref{eq:errordoublesumJHE} that
\begin{align}\label{eq:integraleqfNJHE}
	f_N(y)-\sqrt 2\cdot j_{\alpha,r}y=\int_{0}^{y}\int_{0}^{s}{f_N(t)}\left(\frac{\alpha^2-\frac14}{(1-t)^2}-j_{\alpha,r}^2\right)dtds
	+O(N^{-\frac12}).
\end{align}
We now compare $f_N$ with the solution $f$  of the integral equation
\begin{align}\label{eq:integraleqJHE}
f(y)-\sqrt 2\cdot j_{\alpha,r}y=\int_{0}^{y}\int_{0}^{s}{f(t)}\left(\frac{\alpha^2-\frac14}{(1-t)^2}-j_{\alpha,r}^2\right)dtds.
\end{align}
For this we observe that
$$
  |f(y)-f_N(y)|\le\int_{0}^{y}\left|\left(\frac{\alpha^2-\frac14}{(1-t)^2}-j_{\alpha,r}^2\right)(f(t)-f_N(t))\right|dt
  +| O(N^{-\frac12})|$$
  where
$\left|\frac{\alpha^2-\frac14}{(1-t)^2}-j_{\alpha,r}^2\right|$ is bounded on compact intervals in $[0,1[$.
We thus conclude from the lemma of Gronwall that 
\begin{align}\label{eq:JHElocalconv}
|f_N(y)-f(y)|=O(N^{-\frac12})
\end{align}
locally uniformly on $[0,1[$.
    Moreover, as $f_N$ is bounded locally uniformly on $[0,1[$, we conclude from the definition of the $f_N$
        and from (\ref{eq:zNJacobiasymptotics}) that
        \eqref{eq:JHEerrqk} can be improved to
$q_k=O(\sqrt{N})$.
If the proof above is repeated with this estimation, \eqref{eq:JHElocalconv} can be improved to
\begin{align}\label{eq:JHElocalconv2}
|f_N(y)-f(y)|=O(N^{-1}).
\end{align}
We next consider 
the integral equation \eqref{eq:integraleqJHE} which obviously is equivalent to the differential equation
\begin{align}\label{eq:ODEJHEwithinproof}
f''(y)(1-y)^2+f(y)\left(j_{\alpha,r}^2(1-y)^2-\alpha^2+\frac14\right)=0
\end{align}
with the initial values $f(0)=0$ and $f'(0)=\sqrt 2\cdot j_{\alpha,r}$.
The differential equation
\eqref{eq:BesselJHE} and a short computation show that the function $f^{(\alpha,r)}$
in the theorem is the unique solution of this initial value problem. This completes the proof.
\end{proof}

For $r=1,\ldots,N$, the step functions $f_{N,r}$ in \eqref{def:fNJHE} can be regarded as the rows of the orthogonal matrix $T_N$ in
\eqref{eq:TNorthogonalJHE}, i.e., $(f_{N,r})_{r=1,\ldots,N}$ forms an orthonormal system in $L^2([0,1])$. Theorem \ref{theorem:localconvJHE}
now  leads to the suggestion that this also holds for the sequence $(f^{(\alpha,r)})_{r\ge 1}$ of the limit functions.
We are not able to verify this directly from  Theorem 	\ref{theorem:localconvJHE}. However, this follows immediately from a known
corresponding relation for the Bessel function $J_\alpha$; see Eq. 10.22.37 in \cite{NIST}. As this orthonormality is essential
below, we state this result separately:

\begin{lemma}\label{orthonormality-mod-bessel} The functions  $f^{(\alpha, r)}$ in Eq.~\eqref{limit-besseltype} satisfy
\begin{equation}\label{limit-orthonormal}
         \int_0^1 f^{(\alpha, r)}(y)\cdot f^{(\alpha, s)}(y)\> dy=\delta_{r,s}      \quad\quad(r,s\in\mathbb N).
                  \end{equation}
\end{lemma}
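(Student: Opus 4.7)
The plan is to reduce the claimed identity directly to the classical orthogonality relation for the Bessel functions $J_\alpha$ on $[0,1]$, namely
$$\int_0^1 t\, J_\alpha(j_{\alpha,r} t)\, J_\alpha(j_{\alpha,s} t)\, dt = \frac{1}{2}\bigl[J_\alpha'(j_{\alpha,r})\bigr]^2\,\delta_{r,s},$$
which is precisely Eq.~10.22.37 in \cite{NIST}. (Recall that $J_\alpha'(j_{\alpha,r}) = -J_{\alpha+1}(j_{\alpha,r})$ at a zero of $J_\alpha$, so this is the familiar Dini/Fourier--Bessel orthogonality.)

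First I would unfold the definition \eqref{limit-besseltype} of $f^{(\alpha,r)}$: the integrand $f^{(\alpha,r)}(y)\, f^{(\alpha,s)}(y)$ equals
$$\frac{2}{J_\alpha'(j_{\alpha,r})\, J_\alpha'(j_{\alpha,s})}\,(1-y)\, J_\alpha\bigl(j_{\alpha,r}(1-y)\bigr)\, J_\alpha\bigl(j_{\alpha,s}(1-y)\bigr),$$
where the two minus signs from the prefactors combine into a $+2$. Then I would perform the substitution $t = 1-y$, under which the limits $y\in[0,1]$ flip to $t\in[0,1]$ and $dy = -dt$, yielding
$$\int_0^1 f^{(\alpha,r)}(y)\, f^{(\alpha,s)}(y)\, dy = \frac{2}{J_\alpha'(j_{\alpha,r})\, J_\alpha'(j_{\alpha,s})}\int_0^1 t\, J_\alpha(j_{\alpha,r} t)\, J_\alpha(j_{\alpha,s} t)\, dt.$$
Applying the Bessel orthogonality relation above gives $\delta_{r,s}$ after cancellation of the factors $J_\alpha'(j_{\alpha,r})$.

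There is essentially no substantial obstacle. The one point worth noting is that the normalizing factor $-\sqrt{2}/J_\alpha'(j_{\alpha,r})$ and the weight $\sqrt{1-y}$ appearing in \eqref{limit-besseltype} were chosen precisely so that the change of variables $t=1-y$ converts the $L^2([0,1])$ inner product into the weighted inner product on $[0,1]$ with respect to $t\,dt$, in which $\bigl(J_\alpha(j_{\alpha,r}\cdot)\bigr)_{r\ge 1}$ is a standard orthogonal system. In this sense, Lemma \ref{orthonormality-mod-bessel} is a direct rewriting of the NIST identity under an elementary substitution.
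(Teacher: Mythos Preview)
Your proof is correct and follows exactly the route the paper indicates: the paper simply says that the orthonormality ``follows immediately from a known corresponding relation for the Bessel function $J_\alpha$; see Eq.~10.22.37 in \cite{NIST},'' and you have written out the elementary substitution $t=1-y$ that makes this explicit.
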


\begin{proof}[Proof of Theorem \ref{asymptotics-jacobi}]
  We keep the notations from the beginning of this section, i.e., 
  $$	T_N= \left(\sqrt{1-z_{i,N}^2}Q_{j-1,N}(z_{i,N})\right)_{i,j=1,\ldots,N}   \quad\text{and}\quad
  \tilde\Sigma_N=\left(\tilde\sigma_{i,j}\right)_{i,j=1,\ldots,N}$$
  satisfy
$$T_N^T\tilde\Sigma_N T_N={diag}\left(\frac{1}{\lambda_1},\frac{1}{\lambda_2},...,\frac{1}{\lambda_N}\right) \quad\text{with}\quad
  \lambda_k=2k(2N+\ab+1-k).$$
  The covariances $\tilde\sigma_{N-r+1,N-s+1}$ then can be written as
		\begin{align*}
		  N^2 \tilde\sigma_{N-r+1,N-s+1}&=
                  \sum_{k=1}^N\frac{N}{\lambda_k}\left(\sqrt{N}\sqrt{1-\big(z_{N-r+1,N}\big)^2} Q_{k-1}^N(z_{N-r+1,N})\right)\\
                  &\quad\quad\quad\quad\cdot \left(\sqrt{N}\sqrt{1-\big(z_{N-s+1,N}\big)^2}
                  Q_{k-1}^N(z_{N-s+1,N})\right)\\
		&=\frac{1}{N}\sum_{k=0}^{N-1}\frac{N^2}{\lambda_{k+1}}\cdot f_{N,r}\left(\frac{k}{N}\right) \cdot f_{N,s}\left(\frac{k}{N}\right) \\
		  &=\frac{1}{N}\sum_{k=0}^{N-1}\frac{N^2}{2(k+1)(2N+\ab-k)}\cdot f_{N,r}\left(\frac{k}{N}\right)
                  f_{N,s}\left(\frac{k}{N}\right).
		\end{align*}
		These expressions
                can be identified as integrals of step functions in which the functions $f_{N,r}$,   $f_{N,s}$ and the step functions
                \begin{align*}
			h_N(y):&=\frac{N^2}{2(k+1)(2N+\ab-k)}{ \bf 1}_{[\frac{k}{N},\frac{k+1}{N})}(y)\\
			&=\frac{1}{2\frac{k+1}{N}\left(2+\frac{\ab}{N}-\frac kN\right)}{ \bf 1}_{[\frac{k}{N},\frac{k+1}{N})}(y) \quad(0\le  y<1 )
		\end{align*}
                with $h_N(1):=1/2$  appear. The $h_N$ clearly tend to the function
		\begin{align*}
		h(y):=	\frac{1}{2y(2-y)} 
		\end{align*}
		for $N\to\infty$ and $y\in ]0,1]$.
                      Moreover, for  $y\in ]0,1]$ and $0\leq k<N$ with  $\frac{k}{N}\leq y<\frac{k+1}{N}$, we have
		\begin{equation}\label{est-hn}
		 0\le	 h_N(y)=\frac{1}{2\frac{k+1}{N}\left(2+\frac{\ab}{N}-\frac kN\right)}\leq \min\left(\frac{N}{2},\frac{1}{2y}\right).
		\end{equation}
                We now claim that
\begin{equation}\label{end-limit}
  \lim_{N\to \infty}N^2\sigma_{N-r+1,N-s+1}=	\lim_{N\to \infty}\int_0^1 h_N(y)f_{N,r}(y) f_{N,s}(y)\> dy=
  \int_{0}^{1} h(y)f^{(\alpha,r)}(y)f^{(\alpha,s)}(y)\> dy,
\end{equation}
which together with the definition  of the $f^{(\alpha,r)}$ and the substitution $y\mapsto 1-y$ in the integral
yields the first statement of the theorem.
Please notice that the integral on the RHS of \eqref{end-limit} exists as $f^{(\alpha,r)}(0)=0$ and $f^{(\alpha,r)\prime}(0)>0$.

We now prove the second $=$ in \eqref{end-limit}. We shall do this first for $r=s$ and
consider $[0,1]$-valued continuous functions $g_\delta$ (with $\delta\in]0,1/2[$)
    with $g_\delta(y)=1$ for $y\in[2\delta,1]$ and  $g_\delta(y)=0$ for $y\in[0,\delta]$.
Then
\begin{align}\label{end-limit-est}
  \Bigl|&\int_0^1 h_N(y)f_{N,r}(y)^2 \> dy-\int_{0}^{1} h(y)f^{(\alpha,r)}(y)^2 \> dy\Bigr| 
  \notag\\
  &\le \left|\int_{0}^{1} g_\delta(y)h(y)(f_{N,r}(y)^2-f^{(\alpha,r)}(y)^2)\> dy \right|+
  \left|\int_{0}^{1} g_\delta(y)f_{N,r}(y)^2(h(y)-h_N(y))\> dy \right|\notag\\
  &\quad+ \left|\int_{0}^{2\delta}(1-g_\delta(y))h_N(y)f_{N,r}(y)^2\> dy \right|
  +\left|\int_{0}^{2\delta}(1-g_\delta(y))h(y)f^{(\alpha,r)}(y)^2\> dy \right|\notag\\
&=: I_1+I_2+I_3+I_4.
\end{align}
Now let $\epsilon>0$. Then, by the properties of $f^{(\alpha,r)}$ at $0$, $I_4\le\epsilon$ for $\delta>0$ small enough.
The same holds for  $I_3$, as \eqref{est-hn}, $f^{(\alpha,r)}(0)=0$, and \eqref{eq:JHElocalconv2} lead to
\begin{align}h_N(y)f_{N,r}(y)^2\le& h_N(y)f^{(\alpha,r)}(y)^2 +  h_N(y)|f_{N,r}(y)^2-f^{(\alpha,r)}(y)^2|\notag\\
\le& C_1\frac{1}{y}y^2 + h_N(y)|f_{N,r}(y)-f^{(\alpha,r)}(y)|(f_{N,r}(y)+f^{(\alpha,r)}(y))\notag\\
\le&  C_1y +h_N(y)|f_{N,r}(y)-f^{(\alpha,r)}(y)|(|f_{N,r}(y)-f^{(\alpha,r)}(y)|+2|f^{(\alpha,r)}(y)|)\notag\\
\le&  C_1y +C_2N\frac{1}{N} (\frac{1}{N}+y)=  C_1y +C_2(\frac{1}{N}+y)
		\end{align}
for some constants $C_1,C_2>0$.  Moreover,
as the probability measures $f_{N,r}(y)^2dy$ tend to the  probability measure $f^{(\alpha,r)}(y)^2\> dy$
vaguely on $[0,1[$ by Theorem \ref{theorem:localconvJHE}
and thus weakly, we see that $I_1\le\epsilon$ for $N$ large enough. Finally, as $h_N\to h$ uniformly on $[\delta,1]$, we also have
$I_2\le\epsilon$  for $N$ large. This proves the second $=$ in \eqref{end-limit} for $r=s$.

We next observe that this  proof of \eqref{end-limit} for $r=s$
together with the orthogonality part in Lemma \ref{orthonormality-mod-bessel} 
also implies that for $r\ne s$,
$$	\lim_{N\to \infty}\int_0^1 h_N(y)(f_{N,r}(y)+ f_{N,s}(y))^2\> dy=
\int_{0}^{1} h(y)(f^{(\alpha,r)}(y)+f^{(\alpha,s)}(y))^2\> dy.$$
This together with  \eqref{end-limit} for $r=s$ now lead to  \eqref{end-limit} for $r \ne s$.
This completes the proof of \eqref{limit-jacobi-trigonometric}.

Finally,  \eqref{limit-jacobi-algebraic} follows from \eqref{limit-jacobi-trigonometric},
 \eqref{connection-covariances},
and \eqref{eq:zNJacobiasymptotics}.
\end{proof}

\section{Hard edge covariances for frozen Laguerre ensembles}

As announced in the introduction, the approach to the limits of the extremal covariances for frozen Jacobi ensembles above
can be applied also to the hard edge analysis for frozen Laguerre ensembles, i.e., to the smallest eigenvalues.
For this we first recapitulate some facts from \cite{A, AV, AHV, V}.
In order to state a CLT similar to Theorem \ref{theoremclttrig}, we fix some parameter $\nu>0$ and consider the ordered
zeros $0< z_{1,N}^{(\nu-1)}<\ldots<  z_{N,N}^{(\nu-1)}$
of the Laguerre polynomial $L_N^{(\nu-1)}$ as defined e.g. in \cite{S}. We also use the vector
$${\bf r}:={\bf r}(\nu):=(r_1,\ldots, r_N):=\left(\sqrt{ z_{1,N}^{(\nu-1)}}, \ldots, \sqrt{ z_{N,N}^{(\nu-1)}}\right).$$
By \cite{ AV,  V, DE}, we then have the following CLT for the $\beta$-Laguerre ensembles 
\begin{equation}\label{densitiybetaLaguerre2}
d\mu_L^{\beta,\nu}(x)=	C_{\beta,\nu, N,L}\prod_{1\leq i<j\leq N}(x_j^2-x_i^2)^{2\beta}\prod_{i=1}^N\left( x_i^{2\nu\beta} e^{-x_i^2/2}\right)\> dx
\end{equation}
on  the Weyl chamber  $C_N^B:=\{x\in\mathbb R^N:\> 0\le x_1\le x_2\le \cdots\le x_N\}$:

\begin{theorem}\label{clt-laguerre-complete}
Let $\nu>0$.  For $\beta>0$ consider Laguerre-type random variables $X_\beta$ on $C_N^B$ 
  with the densities (\ref{densitiybetaLaguerre2}).
  Then  $ X_\beta-\sqrt{2\beta} \cdot {\bf r}$
 	converges  for $\beta\to\infty$ in distribution to 
        $N(0,\Sigma_N)$ where the inverse
        $\Sigma_N^{-1}=:S_N=(s_{i,j})_{i,j=1,...,N}$ of the covariance matrix $\Sigma_N$  is given by
 	\begin{equation}
 	   s_{i,j}=\begin{cases} 1+\frac{\nu}{r_i^2}+\sum_{l\ne i}
(r_i-r_l)^{-2}+\sum_{l\ne i} (r_i+r_l)^{-2} & \text{for}\quad i=j, \\
 (r_i+r_j)^{-2}-(r_i-r_j)^{-2} & \text{for}\quad i\ne j.
 	\end{cases}
 	\end{equation}
The matrix  $ S_N$ has the eigenvalues $\lambda_k=2k$ with $k=1,\ldots,N,$
 	and the $\lambda_k$ have  eigenvectors of   the form
 	$$v_k:=\left(q_{k-1}(r_1^2)\sqrt r_1,\ldots,q_{k-1}(r_N^2)\sqrt r_N\right)^T$$
 	with the finite orthonormal
        polynomials $(q_{k})_{k=0,\ldots,N-1}$  associated  with the discrete probability measure
\begin{equation}\label{orthogonality-measure-b1}
\mu_{N,\nu}:=\frac{1}{N(N+\nu-1)}(z_{1,N}^{(\nu-1)}\delta_{z_{1,N}^{(\nu-1)}}+\ldots+z_{N,N}^{(\nu-1)}\delta_{z_{N,N}^{(\nu-1)}}).
\end{equation}
\end{theorem}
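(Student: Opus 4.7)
The theorem gathers results already established in \cite{AV, V, DE}, and the plan is to re-derive it by applying Laplace's method to the log-density
\begin{equation*}
L_\beta(x) = 2\beta \Bigl[\sum_{i<j}\log(x_j^2 - x_i^2) + \nu \sum_i \log x_i\Bigr] - \tfrac12\sum_i x_i^2
\end{equation*}
of \eqref{densitiybetaLaguerre2} (modulo the Selberg constant). The strategy is to show that $L_\beta$ has a unique strict maximum on the open Weyl chamber at $x^* := \sqrt{2\beta}\,\mathbf{r}$, compute $S_N := -\mathrm{Hess}\,L_\beta(x^*)$, and then invoke the standard saddle-point argument to obtain the Gaussian limit for $X_\beta - x^*$ with covariance $S_N^{-1}$.

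The critical-point equation $\partial_k L_\beta = 0$ reads $\sum_{l\ne k}\frac{2 x_k}{x_k^2 - x_l^2} + \frac{\nu}{x_k} = \frac{x_k}{2\beta}$; at $x = x^*$ this reduces to $\sum_{l\ne k}\frac{2}{r_k^2-r_l^2} + \frac{\nu}{r_k^2} = 1$, which is precisely the identity obtained by evaluating the Laguerre differential equation $z L'' + (\nu - z)L' + NL = 0$ (for $L = L_N^{(\nu-1)}$) at each zero $z = r_k^2$ and using $L''(r_k^2)/L'(r_k^2) = 2\sum_{l\ne k}(r_k^2 - r_l^2)^{-1}$. Strict concavity of $L_\beta$ on $C_N^B$ (from the Coulomb logarithms plus the Gaussian term) gives uniqueness. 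Differentiating once more and exploiting $x_k^2 - x_l^2 \sim 2\beta(r_k^2 - r_l^2)$, the $\beta$-factors cancel, and the partial fractions
\begin{equation*}
\frac{r_k^2+r_l^2}{(r_k^2-r_l^2)^2} = \tfrac12\bigl[(r_k-r_l)^{-2} + (r_k+r_l)^{-2}\bigr], \qquad
\frac{4 r_k r_l}{(r_k^2-r_l^2)^2} = (r_k-r_l)^{-2} - (r_k+r_l)^{-2}
\end{equation*}
reproduce exactly the entries of $S_N$ stated in the theorem; the diagonal constants $1$ and $\nu/r_k^2$ come respectively from $-\partial_k^2(x_k^2/2)$ and from $-\partial_k^2(2\nu\beta\log x_k)\big|_{x^*}$.

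For the Gaussian limit I would expand $L_\beta(x^* + y) = L_\beta(x^*) - \tfrac12 y^\top S_N y + R_\beta(y)$ with $R_\beta(y) = O(\beta^{-1/2}\|y\|^3)$ locally uniformly, use positive definiteness of $S_N$ together with the explicit Selberg normalization to control tails, and conclude that $X_\beta - x^*$ converges in distribution to $N(0,S_N^{-1})$. The eigendata are then extracted by the dual-orthogonal-polynomial machinery already deployed for Lemma \ref{jacobi-eigenvectors}: after the similarity $\widetilde S_N := \operatorname{diag}(\sqrt{r_i})\, S_N \,\operatorname{diag}(\sqrt{r_i})$, the resulting matrix coincides, up to renormalization, with the Jacobi matrix of the three-term recurrence for the orthonormal polynomials $(q_k)$ associated with $\mu_{N,\nu}$, so that the spectrum $\{2k\}_{k=1}^N$ and the vectors $v_k$ are read off from the orthogonality and recurrence of the $q_k$. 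The main obstacle is precisely this last step: matching the off-diagonal entries $(r_k+r_l)^{-2}-(r_k-r_l)^{-2}$ against the dual three-term coefficients requires combining the contiguous relation between $L_N^{(\nu-1)}$ and $L_{N-1}^{(\nu)}$ with the Christoffel--Darboux formula, essentially as carried out in \cite{AV, AHV}.
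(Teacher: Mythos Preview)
The paper does not prove this theorem; it simply quotes it from \cite{AV, V, DE}, so there is no in-paper argument to compare against. Your Laplace-method derivation of the CLT and of the entries of $S_N$ is correct and is essentially the route taken in \cite{V, DE}: the critical-point identity reduces to the Laguerre ODE at the zeros exactly as you describe, strict concavity follows because each $\log(x_j\pm x_i)$ and $\log x_i$ is concave while $-x_i^2/2$ is strictly concave, and the partial-fraction identities reproduce $s_{i,j}$ on the nose.

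The eigendata extraction, however, contains a genuine error. You assert that after the congruence $\widetilde S_N = \operatorname{diag}(\sqrt{r_i})\, S_N\, \operatorname{diag}(\sqrt{r_i})$ the resulting matrix ``coincides, up to renormalization, with the Jacobi matrix of the three-term recurrence for the $(q_k)$''. This cannot hold: conjugation by a diagonal matrix preserves the sparsity pattern, and $S_N$ is a full matrix (every off-diagonal entry $(r_i+r_j)^{-2}-(r_i-r_j)^{-2}$ is nonzero), so $\widetilde S_N$ is also full, never tridiagonal. More decisively, the Jacobi matrix of the $q_k$ has as eigenvalues the support points $r_i^2=z_{i,N}^{(\nu-1)}$ of $\mu_{N,\nu}$, whereas $S_N$ has eigenvalues $2k$; no diagonal rescaling can reconcile these spectra. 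The actual argument in \cite{AV} proceeds differently: one verifies the eigenvector identity $S_N v_k = 2k\, v_k$ componentwise by combining Stieltjes-type relations among the zeros $r_i^2$ (obtained by differentiating the Laguerre ODE) with the three-term recurrence satisfied by the values $q_{k-1}(r_i^2)$. The tridiagonal structure enters only through that recurrence on the \emph{eigenvector components}, not through any tridiagonality of $S_N$ or a conjugate of it. Your final sentence, pointing to contiguous relations and Christoffel--Darboux, is closer to what is actually needed, but the similarity-to-Jacobi-matrix step as stated should be dropped.
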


We now consider the slightly renormalized orthogonal polynomials
$$\Bigl(Q_{k,N}:= \frac{1}{\sqrt{N(N+\nu-1)}}q_{k}\Bigr)_{k=0,..,N-1}$$
for which then the matrices
	\begin{equation}\label{eq:TNorthogonalLHE}	
	T_N:= \left(\sqrt z_{i,N}^{(\nu-1)}\cdot Q_{j-1,N}(z_{i,N}^{(\nu-1)})\right)_{i,j=1,\ldots,N}
        \end{equation}
        are orthogonal with $T_N^T\Sigma_N T_N=diag(\frac{1}{2},\frac{1}{4},...,\frac{1}{2N})$. Moreover, as discussed in \cite{AHV},
        these $Q_{k,N}$ are the dual orthogonal polynomial of the Laguerre polynomials $(L_k^{(\nu-1)})_{k=0,...,N}$ in the sense of 
        De Boor and Saff \cite{BS, I, VZ} which means that, by the three-term-recurrence of the Laguerre polynomials in \cite{S},
        they satisfy the three-term-recurrence
\begin{align}\label{3termlaguerredualnorm2}
	xQ_{k,N}(x)=&\sqrt{(N-k)(N-k+\nu-1)}Q_{k-1,N}(x)+(2(N-k)+\nu-2)Q_{k,N}\notag\\ 
	+&\sqrt{(N-k-1)(N-k-1+\nu-1)}Q_{k+1,N}(x)\quad(k<N)
	\end{align}
with the initial condition $Q_{-1,N}=0$ and $Q_{0,N}=\frac{1}{\sqrt{N(N+\nu-1)}}$; see also \cite{A}.
The following result is shown in principle by Andraus \cite{A} (up to the precise normalizations)
and analogous to Theorem \ref{theorem:localconvJHE}. We  point out that we here multiply the polynomials
$Q_{k,N}$ with positive leading coefficients by $(-1)^k$ as we are
interested now in $Q_{k,N}(y)$ for small positive $y$.

	\begin{theorem} \label{theorem:localconvLHE}
		Let $\nu>0$ and  $r\in\mathbb{N}$. For  $N\in\mathbb{N}$ with $N>r$ consider the step functions
		\begin{equation}\label{stepfunction-lagu}
		  f_N(y):=f_N^{[\nu,r)}(y):=
                    \sum_{k=0}^{N-1}\sqrt{N}\sqrt{z_{r,N}^{(\nu-1)}}(-1)^kQ_{k,N}(z_{r,N}^{(\nu-1)}){\bf 1}_{[\frac{k}{N},\frac{k+1}{N}[}(y).
		\end{equation}
                as well as the function
\begin{align}\label{eq:limitfLHE}
		f^{(\nu,r)}(y):=\frac{-J_{\nu-1}(j_{\nu-1,r}\sqrt{1-y})}{J_{\nu-1}^\prime(j_{\nu-1,r})}.
		\end{align} 
for $y\in[0,1)$.  Then, for $N\to\infty$, $|f_N-f^{(\nu,r)}|=O(N^{-1})$ locally  uniformly on $[0,1[$.
	\end{theorem}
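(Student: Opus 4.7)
The plan is to mimic the proof of Theorem \ref{theorem:localconvJHE}: rewrite the three-term recurrence \eqref{3termlaguerredualnorm2} at $x = z_{r,N}^{(\nu-1)}$ as a perturbed discrete difference equation, sum twice, and pass to a limit integral equation whose unique solution is $f^{(\nu,r)}$ via Bessel's ODE \eqref{eq:BesselJHE}. Two features distinguish the Laguerre case from the Jacobi case: the hard-edge zero asymptotic $z_{r,N}^{(\nu-1)} = j_{\nu-1,r}^2/(4N) + O(N^{-2})$ (Tricomi/Gatteschi) replaces \eqref{eq:zNJacobiasymptotics}, and the recurrence coefficients are no longer asymptotically symmetric, which forces an additional Abel-summation step in the limit argument.

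Setting $q_k := (-1)^k Q_{k,N}(z_{r,N}^{(\nu-1)})$, $A_k := \sqrt{(N-k)(N-k+\nu-1)}$, $C_k := \sqrt{(N-k-1)(N-k+\nu-2)}$, and $B_k := 2(N-k)+\nu-2$, the sign-flipped recurrence reads $A_k q_{k-1} + C_k q_{k+1} = (B_k - z_{r,N}^{(\nu-1)})q_k$, equivalently
\[ A_k(q_{k+1}-2q_k+q_{k-1}) + (C_k - A_k)(q_{k+1}-q_k) = (B_k - A_k - C_k - z_{r,N}^{(\nu-1)})q_k.\]
Uniform Taylor expansions for $k \le yN$ give $A_k = (N-k) + (\nu-1)/2 + O((N-k)^{-1})$, $C_k - A_k = -1 + O((N-k)^{-2})$, and $B_k - A_k - C_k = (\nu-1)^2/(4(N-k)) + O((N-k)^{-2})$. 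Dividing by $A_k$ and inserting the zero asymptotic produces
\[ q_{k+1}-2q_k+q_{k-1} = \frac{q_{k+1}-q_k}{N(1-k/N)} + \frac{q_k}{N^2}\!\left[\frac{(\nu-1)^2}{4(1-k/N)^2} - \frac{j_{\nu-1,r}^2}{4(1-k/N)}\right] + E_k,\]
where the a priori identity $\sum_k q_k^2 = 1/z_{r,N}^{(\nu-1)} = O(N)$ (from $T_N T_N^T = I$ in \eqref{eq:TNorthogonalLHE}) yields $q_k = O(\sqrt N)$ and $\sum_k |q_k| = O(N)$, which is enough to make $\sum_l \sum_k E_k = o(1)$ locally uniformly.

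The central new ingredient is to double-sum this identity over $k\in\{0,\ldots,l\}$ and $l\in\{0,\ldots,m-1\}$ with $m = \lfloor yN \rfloor$. The discrete Laplacian telescopes to $q_m - (m+1)q_0$ and the $q_k$ terms on the right give a direct Riemann sum for a double integral, while the first-difference term must be treated by Abel summation,
\[ \sum_{k=0}^l \frac{q_{k+1}-q_k}{1-k/N} = \frac{q_{l+1}}{1-l/N} - q_0 - \sum_{k=1}^l q_k\!\left[\frac{1}{1-(k-1)/N} - \frac{1}{1-k/N}\right], \]
whose bracketed increment equals $-1/(N(1-k/N)^2) + O(N^{-2})$. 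After multiplying by the common scaling $\sqrt N \sqrt{z_{r,N}^{(\nu-1)}} = j_{\nu-1,r}/2 + O(N^{-1})$, the three discrete sums combine into a Riemann sum for $\int_0^y f_N(s)/(1-s)\,ds$ minus $\int_0^y\!\int_0^s f_N(t)/(1-t)^2\,dt\,ds$, and the boundary term $(m+1)\sqrt N\sqrt{z_{r,N}^{(\nu-1)}}q_0$ tends to $(j_{\nu-1,r}/2)y$. I obtain the perturbed integral equation
\[ f_N(y) - \tfrac{j_{\nu-1,r}}{2}\,y = \int_0^y \frac{f_N(s)}{1-s}\,ds + \int_0^y\!\!\int_0^s f_N(t)\,K(t)\,dt\,ds + O(N^{-1/2}) \]
locally uniformly on $[0,1[$, with kernel $K(t) = (\nu-1)^2/(4(1-t)^2) - j_{\nu-1,r}^2/(4(1-t)) - 1/(1-t)^2$.

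Applying Gronwall's lemma to $f_N - f^{(\nu,r)}$ gives $|f_N - f^{(\nu,r)}| = O(N^{-1/2})$ locally uniformly, and the bootstrap trick already used in \eqref{eq:JHElocalconv2} (replacing the a priori $q_k = O(\sqrt N)$ by local uniform boundedness of $f_N$ to improve the error terms) tightens this to $O(N^{-1})$. To close, I would verify that $f^{(\nu,r)}$ in \eqref{eq:limitfLHE} is the unique solution of the unperturbed integral equation by differentiating twice to reduce it to the ODE $4(1-y)^2 f''(y) - 4(1-y) f'(y) + (j_{\nu-1,r}^2(1-y) - (\nu-1)^2)f(y) = 0$ with $f(0)=0$, $f'(0)=j_{\nu-1,r}/2$, and then checking via the substitution $u = \sqrt{1-y}$ in Bessel's equation \eqref{eq:BesselJHE} that $-J_{\nu-1}(j_{\nu-1,r}\sqrt{1-y})/J'_{\nu-1}(j_{\nu-1,r})$ solves this initial-value problem. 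The main obstacle is the Abel-summation bookkeeping for the surviving first-difference term: one has to verify that the boundary contribution $q_{l+1}/(1-l/N)$ and the discrete integration-by-parts remainder combine into exactly the additional $-1/(1-t)^2$ piece of $K(t)$, so that the perturbed integral equation reduces, after differentiation, to Bessel's ODE under $u = \sqrt{1-y}$.
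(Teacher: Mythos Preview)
Your approach is correct and leads to the same limiting ODE and hence the same Bessel identification, but the route differs from the paper's in one structural respect. The paper, following Andraus, does \emph{not} divide the recurrence by $A_k$. Instead it exploits the self-adjoint form
\[
A_{k+1}(q_{k+1}-q_k)-A_k(q_k-q_{k-1})=(B_k-A_k-A_{k+1}-z_r)\,q_k,
\]
sums once (which telescopes to $A_{l+1}(q_{l+1}-q_l)-1$), and then applies a single Abel summation to $\sum_{l}A_l(q_l-q_{l-1})$, producing the factor $A_m q_m\approx(N-m)q_m$ directly. This yields the integral equation
\[
(1-y)f_N(y)-\tfrac{j_{\nu-1,r}}{2}y=\int_0^y\!\!\int_0^s\tfrac{f_N(t)}{4}\Bigl(\tfrac{(\nu-1)^2}{1-t}-j_{\nu-1,r}^2\Bigr)dt\,ds-\int_0^yf_N(t)\,dt+O(N^{-1/2}),
\]
with $(1-y)$ on the left and the single integral $-\int_0^yf_N$ on the right, rather than your $\int_0^y f_N(s)/(1-s)\,ds$ and the extra $-1/(1-t)^2$ inside $K$. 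After one differentiation the two integral equations coincide, so the final ODE and the verification via $u=\sqrt{1-y}$ are identical.

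What each approach buys: the paper's Sturm--Liouville telescoping keeps \emph{all} remainder terms in the form $O(N^{-j})q_k$, so the a~priori bounds $q_k=O(\sqrt N)$, $\sum|q_k|=O(N)$ suffice without further work. In your version, dividing by $A_k$ creates error terms of the shape $O(N^{-2})(q_{k+1}-q_k)$ in $E_k$; the naive bound on these after a double sum is only $O(\sqrt N)$, so your claim ``$\sum_l\sum_k E_k=o(1)$'' actually needs one additional Abel step (or the telescoping $\sum_k(q_{k+1}-q_k)=q_{l+1}-q_0$) applied to those error contributions. This works, but it is exactly the bookkeeping you flag as the ``main obstacle,'' and it should be made explicit. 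Once that is done, your bootstrap from $O(N^{-1/2})$ to $O(N^{-1})$ and the identification of $f^{(\nu,r)}$ proceed just as in the paper.
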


        \begin{proof}	We first recapitulate that by Theorem 1.1 in \cite{G1}, for $r\in\mathbb N$,
		\begin{equation}\label{eq:z1asymptoticshardLaguerre}
		z_r:=z_{r,N}^{(\nu-1)}=\frac{j_{\nu-1,r}^2}{4N+2\nu}+O(N^{-3})=\frac{j_{\nu-1,r}^2}{4N}+O(N^{-2}).
\end{equation}
This, the  computations in  \cite{A}, and the arguments in the proof of Theorem \ref{theorem:localconvJHE} now imply that
	\begin{align}\label{eq:inteqfnLHE}
	  f_N(y)(1-y)-\frac{j_{\nu-1,r}y}{2}
          =\int_{0}^{y}\int_{0}^{s}\frac{f_N(t)}{4}\left(\frac{\alpha^2}{1-{ t}{}}-j_{\nu-1,r}^2\right)dtds-\int_{0}^{y}f_N(t)dt+O(N^{-\frac12})
	\end{align}
        for $y\in[0,1[$.
	Partial integration leads to
	\begin{equation}
	f_N(y)(1-y)-\frac{j_{\nu-1,r}y}{2}=\int_{0}^{y}\left((y-t)\frac{f_N(t)}{4}\left(\frac{(\nu-1)^2}{1-{ t}{}}-j_{\nu-1,r}^2\right)-f_N(t)\right)dt+O(N^{-\frac12}).
	\end{equation}
        This and the lemma of Gronwall now show that
	\begin{align}\label{eq:LHElocalconv}
		|f_N(y)-f(y)|=O(N^{-\frac12})
	\end{align}
for the	solution $f$  of the integral equation
	\begin{align}\label{eq:integraleqLHE}
		f(y)(1-y)-\frac{j_{\nu-1,r}y}{2}=\int_{0}^{y}\left((y-t)\frac{f(t)}{4}\left(\frac{(\nu-1)^2}{1-{ t}{}}-j_{\nu-1,r}^2\right)-f(t)\right)dt.
	\end{align}
	This implies that
        \begin{align}
	Q_{k,N}(z_{r,N}^{(\nu-1)})=O(1).
	\end{align}
	If we repeat the skipped computations above  with this estimation similar to the corresponding part
 in the proof of Theorem \ref{theorem:localconvJHE}, we conclude that \eqref{eq:LHElocalconv} can be improved to
	\begin{align*}
		|f_N(y)-f(y)|=O(N^{-1}).
	\end{align*}
	The integral equation \eqref{eq:integraleqLHE} corresponds to the initial value problem
	\begin{align*}
		f''(y)(1-y)^2-(1-y)f'(y)+f(y)\frac{1}{4}(j_{\nu-1,r}^2(1-y)-(\nu-1)^2)=0 
	\end{align*}
        for $y\in[0,1[$ with  $f(0)=0$ and $f'(0)=j_{\nu-1,r}/2$.
          It can be now easily checked with the differential equation (\ref{eq:BesselJHE}) of the Bessel functions
           that the function
           $f^{(\nu,r)}$ defined in \ref{eq:limitfLHE}
           is the unique solution of this initial value theorem; see also Lemma 3.1 of \cite{A}. This completes the proof.
        \end{proof}

        We recapitulate that the step functions $(f_N^{(\nu,r)})_{r=1,\ldots,N}$ in (\ref{stepfunction-lagu}) are orthonormal in $L^2([0,1])$.
        Moreover, similar to the preceding section,  Eq.~10.22.37 in \cite{NIST} also ensures that the limit functions $(f^{(\nu,r)})_{r\ge 1}$
        are also orthonormal in $L^2([0,1])$. We thus can copy the proof of Theorem \ref{asymptotics-jacobi},
        and obtain the following limit result from Theorem \ref{theorem:localconvLHE}. This result generalizes Theorem 1.3
 in \cite{A} slightly where there slightly different notations are used:

	\begin{theorem}\label{limit-cov-laguerre}
			Consider the covariance matrices $\Sigma_{N}=\left(\sigma_{i,j}^{(N)}\right)_{i,j=1,...,N}$ of 
		        the frozen Laguerre ensembles in Theorem \ref{clt-laguerre-complete}. Then for $r,s\in\mathbb{N}$
		\begin{align*}
		\lim_{N\to\infty} N\sigma_{r,s}^{(N)}=
		\int_{0}^{1}\frac{ J_{\nu-1}(j_{\nu-1,r}\sqrt{1-y}) J_{\nu-1}(j_{\nu-1,s}\sqrt{1-y})}{2y\cdot J_{\nu-1}^\prime(j_{\nu-1,r})J_{\nu-1}^\prime(j_{\nu-1,s})} \>    dy.
		\end{align*}
\end{theorem}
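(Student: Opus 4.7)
The plan is to follow the structure of the proof of Theorem~\ref{asymptotics-jacobi} line by line, using the diagonalization of $\Sigma_N$ from Theorem~\ref{clt-laguerre-complete} together with the local uniform convergence of the step functions $f_N^{[\nu,r)}$ provided by Theorem~\ref{theorem:localconvLHE}. First I would use the orthogonality of $T_N$ in \eqref{eq:TNorthogonalLHE} and the relation $T_N^T\Sigma_N T_N=\operatorname{diag}(1/(2k))_{k=1}^N$ to expand
$$N\sigma_{r,s}^{(N)}=\sum_{k=1}^{N}\frac{1}{2k}\bigl[\sqrt{N}\sqrt{z_{r,N}^{(\nu-1)}}Q_{k-1,N}(z_{r,N}^{(\nu-1)})\bigr]\bigl[\sqrt{N}\sqrt{z_{s,N}^{(\nu-1)}}Q_{k-1,N}(z_{s,N}^{(\nu-1)})\bigr].$$
The sign factors $(-1)^{k-1}(-1)^{k-1}=1$ cancel, so that after reindexing $k\mapsto k+1$ the sum becomes $\int_0^1 h_N(y)f_N^{[\nu,r)}(y)f_N^{[\nu,s)}(y)\,dy$ with the step function
$$h_N(y):=\frac{N}{2(k+1)}\mathbf{1}_{[k/N,(k+1)/N)}(y),\qquad 0\le y<1,$$
which satisfies $h_N\to h$ pointwise on $(0,1]$ for $h(y):=1/(2y)$, together with the uniform bound $0\le h_N(y)\le\min(N/2,1/(2y))$.

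Second, the candidate limit is $\int_0^1 h(y)f^{(\nu,r)}(y)f^{(\nu,s)}(y)\,dy$, and inserting the definition \eqref{eq:limitfLHE} of $f^{(\nu,r)}$ yields exactly the integral in the statement (the two minus signs cancel). To establish this limit I would treat the case $r=s$ first and copy the decomposition $I_1+I_2+I_3+I_4$ from \eqref{end-limit-est}: fix a continuous $g_\delta:[0,1]\to[0,1]$ with $g_\delta=1$ on $[2\delta,1]$ and $g_\delta=0$ on $[0,\delta]$, and split the error
$$\Bigl|\int_0^1 h_N (f_N^{[\nu,r)})^2\,dy-\int_0^1 h(f^{(\nu,r)})^2\,dy\Bigr|$$
into the near-zero contributions $I_3,I_4$ and the contributions $I_1,I_2$ on $[\delta,1]$. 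On $[\delta,1]$ the convergence $h_N\to h$ is uniform and Theorem~\ref{theorem:localconvLHE} gives $f_N^{[\nu,r)}\to f^{(\nu,r)}$ uniformly, disposing of $I_1,I_2$.

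The main obstacle, as in the Jacobi case, is the singular behavior of $h_N$ at $0$. Here I would exploit the crucial vanishing $f^{(\nu,r)}(0)=-J_{\nu-1}(j_{\nu-1,r})/J_{\nu-1}^\prime(j_{\nu-1,r})=0$ together with $f^{(\nu,r)\prime}(0)=j_{\nu-1,r}/2>0$, so that $f^{(\nu,r)}(y)=O(y)$ near $0$ and hence $h(y)f^{(\nu,r)}(y)^2=O(y)$ is integrable at $0$, controlling $I_4$ for small $\delta$. For $I_3$ I would combine this with the rate $|f_N^{[\nu,r)}-f^{(\nu,r)}|=O(1/N)$ and the bound $h_N(y)\le N/2$ to obtain $h_N(y)f_N^{[\nu,r)}(y)^2\le C_1 y+C_2(1/N+y)$ on $[0,2\delta]$, whence $I_3$ is uniformly small in $N$ by choosing $\delta$ small. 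This settles $r=s$. The general case $r\ne s$ then follows by the polarization argument used at the end of the proof of Theorem~\ref{asymptotics-jacobi}: applying the $r=s$ identity to $f_N^{[\nu,r)}+f_N^{[\nu,s)}$ and subtracting the two diagonal limits, while using the $L^2([0,1])$-orthonormality of $(f^{(\nu,r)})_{r\ge 1}$ guaranteed by NIST 10.22.37 to match the cross term on the limit side, yields the claimed off-diagonal identity and completes the proof.
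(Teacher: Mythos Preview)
Your proposal is correct and follows essentially the same approach as the paper: expand $N\sigma_{r,s}^{(N)}$ via the diagonalization $T_N^T\Sigma_N T_N=\operatorname{diag}(1/(2k))$, recognize the result as $\int_0^1 h_N f_N^{[\nu,r)} f_N^{[\nu,s)}\,dy$ with $h_N\to 1/(2y)$, and then justify interchanging limit and integral by repeating the $I_1+I_2+I_3+I_4$ decomposition and the polarization argument from the proof of Theorem~\ref{asymptotics-jacobi}, invoking the $L^2$-orthonormality of $(f^{(\nu,r)})_{r\ge1}$ from NIST 10.22.37. The paper's own proof is in fact terser---it simply cites ``the arguments of the proof of Theorem~\ref{asymptotics-jacobi}'' for the interchange step---so your write-up actually spells out more of the details, all of them correctly.
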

                
        \begin{proof}  We use the notations of Theorem \ref{theorem:localconvLHE} as well as
          $$T_N^T\Sigma_N T_N=diag\left(\frac{1}{2},\frac{1}{4},...,\frac{1}{2N}\right).$$
          Hence,the covariances $\sigma_{r,s}^{(N)}$ can be written as
			\begin{align*}
			  N\sigma_{r,s}^{(N)}&=\sum_{k=1}^N\frac{1}{2k}\left(\sqrt{N}\sqrt{z_{r,N}^{(\nu-1)}}Q_{k-1}^N(z_{r,N}^{(\nu-1)})\right)
                          \left(\sqrt{N}\sqrt{z_{s,N}^{(\nu-1)}}Q_{k-1}^N(z_{s,N}^{(\nu-1)})\right)\\
				&=\frac{1}{N}\sum_{k=0}^{N-1}\frac{1}{\frac{2(k+1)}{N}}f_N^{(\nu,r)}(k/N)f_N^{(\nu,s)}(k/N).
			\end{align*}
                        These sums can be regarded as integrals over step functions on $[0,1[$, where
                            Theorem \ref{theorem:localconvLHE} implies that the integrands converge locally uniformly on $[0,1[$ to
$$\frac{ J_{\nu-1}(j_{\nu-1,r}\sqrt{1-y}) J_{\nu-1}(j_{\nu-1,s}\sqrt{1-y})}{2y\cdot J_{\nu-1}^\prime(j_{\nu-1,r})J_{\nu-1}^\prime(j_{\nu-1,s})},$$
                                  The arguments of the proof of Theorem \ref{asymptotics-jacobi} imply
                                  that we again can interchange the limit $N\to\infty$ with the integration
                                  which then leads to the claim.
\end{proof}

Theorem \ref{limit-cov-laguerre-intro} is now  a direct consequence from  Theorem  \ref{limit-cov-laguerre}.

        We finally remark that a generalization of Theorem \ref{limit-cov-laguerre} with a  different proof
        can be found in \cite{L}. We  notice that the limit in Theorem 1 of \cite{L} contains an additional factor
         $j_{\alpha,r} j_{\alpha,s}$ which is caused by  different coordinates of the Laguerre ensembles 
        similar to the different limits in Theorem \ref{asymptotics-jacobi} for different coordinates.


\begin{thebibliography}{999}

\bibitem[A]{A}  S. Andraus, Freezing Laguerre ensemble in the hard edge,
  Institute of Mathematical Analysis, Research Institute for Mathematical Sciences, Kyoto, Vol. 2177 (2021).

\bibitem[AHV]{AHV} S. Andraus, K. Hermann, M. Voit,
  Limit theorems and soft edge of freezing random matrix models via dual orthogonal polynomials.
\textit{J. Math. Phys.} 62 (2021),  083303.
  
\bibitem[AV]{AV}  S. Andraus, M. Voit, Central limit theorems for multivariate
Bessel processes 
in the freezing regime. II. The covariance matrices. 
\textit{J. Approx. Theory} 246 (2019), 65-84. 


\bibitem[BS]{BS} C. de Boor, E. Saff, Finite sequences of orthogonal polynomials
connected by a Jacobi matrix.
 \textit{Linear Algebra Appl.} 75 (1986), 43-55.

\bibitem[BG]{BG} A. Borodin, V. Gorin, General $\beta$-Jacobi corners process and the Gaussian free field.
  \textit{Comm. Pure Appl. Math. } 68 (2015),  1774-1844.


\bibitem[CDM]{CDM} C. Cuenca, M. Dolega, A. Moll,
  Universality of global asymptotics of Jack-deformed random Young diagrams at varying temperatures.
  \textit{Ann. Probab.}, to appear, arXiv:2304.04089. 
 

  \bibitem[DE]{DE} I. Dumitriu, A. Edelman, Eigenvalues of Hermite and Laguerre ensembles: large beta asymptotics.
  \textit{Ann. Inst. Henri Poincare (B)} 41 (2005), 1083-1099.

\bibitem[DK]{DK} I. Dumitriu, P. Koev,
Distributions of the extreme eigenvalues of beta-Jacobi
              random matrices,
\textit{SIAM J. Matrix Anal. Appl.} 30 (2008), {1-6}.

  
\bibitem[F1]{F1}  P. Forrester, Log Gases and Random Matrices, London Mathematical Society, London, 2010.

\bibitem[F2]{F2}  P. Forrester,  Large deviation eigenvalue density for the soft edge 
              {L}aguerre and {J}acobi {$\beta$}-ensembles, \textit{J. Phys. A: Math. Theor.}  45 (2012), 145201.

\bibitem[FW]{FW} P. Forrester, S.  Warnaar, The importance of the Selberg integral, \textit{ Bull. Amer. Math. Soc.} 45 (2008),
  489--534.

\bibitem[G1]{G1} L. Gatteschi, Uniform approximations for the zeros of Laguerre polynomials. In: Numerical Mathematics, Singapore 1988,
  Internat. Schriftenreihe Num. Math., Vol. 86, Birkh\"auser, Basel 1988, pp. 137-148.

 \bibitem[G2]{G2} L. Gatteschi,  On some approximations for the zeros of Jacobi polynomials, In:
Approximation and computation, West Lafayette, 1993. Internat. Ser. Numer. Math. 119, pp. 207--218,
Birkh\"{a}user, Boston, MA, 1994.

\bibitem[GK]{GK} V. Gorin, V. Kleptsyn, Universal objects of the infinite beta random matrix theory.
 \textit{ J. European Math. Soc.} 26, 3429-3496 (2024).

  

  \bibitem[HO]{HO} G. Heckman, E. Opdam, Jacobi polynomials and hypergeometric functions associated with root systems.
  In: Encyclopedia of Special Functions, Part II: Multivariable Special Functions, eds. T.H. Koornwinder, J.V. Stokman,
 Cambridge University Press, Cambridge, 2021.

\bibitem[HS]{HS} G. Heckman, H. Schlichtkrull, 
Harmonic Analysis and Special Functions on Symmetric Spaces, Part I.  
Perspectives in Mathematics, Vol. 16, Academic Press, 1994.

\bibitem[He]{He} K. Hermann, Freezing Limits for General Random Matrix Models and and Applications to Classical $\beta$-Ensembles.
  PhD-thesis, Dortmund 2024.

\bibitem[HV]{HV} K. Hermann, M. Voit, Limit theorems for Jacobi ensembles with large
parameters.   \textit{Tunisian J. Math.} 3-4 (2021), 843--860.

\bibitem[I]{I} M. Ismail,  Classical and Quantum Orthogonal Polynomials in one
Variable.
 Cambridge University Press, Cambridge 2005. 
   
  \bibitem[Ji]{Ji} T. Jiang, Limit theorems for beta Jacobi ensembles. 
    \textit{Bernoulli} 19 (2013), 1028--1046.

   \bibitem[Jo]{Jo} I.M.  Johnstone, 
Multivariate analysis and Jacobi ensembles: largest
              eigenvalue, Tracy-Widom limits and rates of convergence,
 \textit{Ann. Statist.} 36 (2008), 2638--2716.

\bibitem[K]{K} R. Killip,
Gaussian fluctuations for $\beta$ ensembles.
\textit{ Int. Math. Res. Not.} 2008, no. 8, Art. ID rnn007, 19 pp..

\bibitem[KN]{KN} R. Killip, I. Nenciu,  Matrix models for circular ensembles.
  \textit{ Int. Math. Res. Not.} 50 (2004), 2665--2701.

\bibitem[L]{L}   M. Lerner-Brecher, On the hard edge limit of the zero temperature Laguerre beta corners process, 
preprint,  arXiv:2308.14707

  \bibitem[Li]{Li} R.A. Lippert, A matrix model for the $\beta$-Jacobi ensemble,
	\textit{J. Math. Phys.} 44 (2003), 4807-4816.
  
      \bibitem[M]{M} M. Mehta, Random matrices (3rd ed.), Elsevier/Academic Press, Amsterdam, 2004.

        \bibitem[N]{N} J. Nagel, 
	Nonstandard limit theorems and large deviations for the Jacobi beta
	ensemble.
	\textit{Random Matrices Theory Appl.} 3 (2014),  Article ID 1450012.



\bibitem[NIST]{NIST} F.W.J. Olver et al. (Eds.), NIST-Handbook of Mathematical Functions. 
  National Institute of Standards and Technology 2010.
  

\bibitem[S]{S} G. Szeg{\"o}, Orthogonal Polynomials. 
  Colloquium Publications (American Mathematical Society), Providence, 1939.


  
\bibitem[VZ]{VZ} L. Vinet, A. Zhedanov, 
A characterization of classical and semiclassical orthogonal polynomials from their
dual polynomials.
\textit{J. Comput. Appl. Math.} 172 (2004), 41-48.



\bibitem[V]{V}  M. Voit, Central limit theorems 
 for multivariate Bessel processes in the freezing regime. \textit{J. Approx.
Theory} 239 (2019),
 210--231.
 
\bibitem[W]{W} G. N. Watson, A Treatise on the Theory of Bessel Functions. Cambridge University Press, Cambridge,
   1995.
 

\bibitem[Wi]{Wi} B. Winn, Extremal eigenvalues of random matrices from Jacobi ensembles, 
 \textit{J. Math. Phys.} 65, 093502 (2024),
 210--231.
\end{thebibliography}
\end{document}